\documentclass[letterpaper,12pt]{article}

\usepackage{amsmath}
\usepackage{amsfonts,amsthm,amssymb}
\usepackage{eufrak}
\usepackage{fancyhdr}
\usepackage[all]{xy}

\cfoot{\thepage}

\addtolength{\footskip}{\baselineskip}

\newtheorem{thm}{Theorem}[section]

\newtheorem{prop}[thm]{Proposition}
\newtheorem{cor}[thm]{Corollary}

\newtheorem{lem}[thm]{Lemma}


\renewcommand{\part}{\partial}
\newcommand{\deq}{\mathrel{\mathop:}=}
\newcommand{\Set}{\mathbf{Set}}

\newcommand{\Grp}{\mathbf{Grp}}

\newcommand{\FinGrp}{\mathbf{FinGrp}}

\newcommand{\mc}[1]{\mathcal{#1}}

\newcommand{\sk}{\mathrm{sk}}
\newcommand{\cosk}{\mathrm{cosk}}

\renewcommand{\lim}{\varprojlim}
\newcommand{\colim}{\varinjlim}
\newcommand{\et}[1]{\acute{e}t({#1})}
\newcommand{\finet}[1]{\mathbf{Fin\acute{e}t}({#1})}

\newcommand{\Shv}[1]{\mathbf{Shv}({#1})}

\newcommand{\Mor}[1]{\mathrm{Mor}({#1})}

\newcommand{\Gal}[1]{\mathrm{Gal}({#1})}
\newcommand{\Gals}{\mathrm{Gal}}
\newcommand{\ets}{\acute{e}t}

\newcommand{\Aut}{\mathrm{Aut}}

\newcommand{\Hom}{\mathrm{Hom}}

\newcommand{\Tors}{\mathrm{Tors}}
\newcommand{\Spec}[1]{\textrm{Spec}\,{#1}}

\newcommand{\khorn}{\Lambda^n_k}
\newcommand{\nsimp}{\Delta^n}
\newcommand{\dnsimp}{\part\nsimp}

\newcommand{\Exinfn}{\mathrm{Ex}^{\infty}}

\newcommand{\ccomp}{\scriptstyle{\Pi}\displaystyle}

\newcommand{\rars}{\rightrightarrows}
\newcommand{\pr}{\mathrm{pr}}

\newcommand{\loc}{\mathrm{loc.}}

\newcommand{\hra}{\hookrightarrow}

\newcommand{\shpi}{\tilde{\pi}}

\newcommand{\HR}{\mathrm{HR}}

\newcommand{\epi}{\twoheadrightarrow}

\newcommand{\Gtor}{G\textrm{-}\Tors}
\newcommand{\Htor}{H\textrm{-}\Tors}

\newcommand{\SLC}{\mathbf{SLC}}

\begin{document}
\title{Nonabelian $H^1$ and the \'Etale van Kampen Theorem}
\author{Michael D. Misamore\\Universit\"at Duisburg-Essen\\Universit\"atsstr. 2,\\45117 Essen\\Germany}
\date{October 28, 2009}
\maketitle

\begin{abstract}
Generalized \'etale homotopy pro-groups $\pi_1^{\ets}(\mc{C}, x)$ associated to pointed connected small Grothendieck
sites $(\mc{C}, x)$ are defined and their relationship to Galois theory and the theory of pointed torsors for discrete
groups is explained.

Applications include new rigorous proofs of some folklore results around $\pi_1^{\ets}(\et{X}, x)$, a description of
Grothendieck's short exact sequence for Galois descent in terms of pointed torsor trivializations, and a new \'etale
van Kampen theorem which gives a simple statement about a pushout square of pro-groups that works for covering
families which do not necessarily consist exclusively of monomorphisms. A corresponding van Kampen result for
Grothendieck's profinite groups $\pi_1^{\Gals}$ immediately follows.

\medbreak
\noindent Keywords: \'Etale homotopy theory, simplicial sheaves.
\medbreak

\noindent 2000 {\em Mathematics Subject Classification}.
Primary 18G30; Secondary 14F35.
\end{abstract}

\section{Introduction}

The \'etale fundamental group $\pi_1(X, x)$ of a pointed connected locally noetherian scheme was defined by
Artin-Mazur in \cite{AM} by means of the cofiltered category of pointed representable hypercovers of $X$ and pointed
simplicial homotopy classes of maps between them. The significance of this object may be seen immediately in the case
$X = \Spec{k}$ where $k$ is a field: fixing a geometric point $x: \Spec{\Omega} \to \Spec{k}$ associated to some
separable closure $\Omega/k$, one may directly compute that $\pi_1^{\ets}(\Spec{k}, x) \cong \Gals(\Omega/k)$, the
absolute Galois group of $k$ with respect to $\Omega$. As the definition of the term ``hypercover'' is independent of
the underlying Grothendieck topology, one may generalize the definition of $\pi_1^{\ets}$ to apply to the hypercovers
of any pointed connected small Grothendieck site $(\mc{C}, x)$, where the ``point'' $x$ is interpreted as a geometric
morphism
\begin{displaymath}
x: \Set \to \Shv{\mc{C}}
\end{displaymath}
of toposes. The object of this paper is to explain some of the basic properties of $\pi_1^{\ets}(\mc{C}, x)$ at this
level of generality, including its relationship with Grothendieck's Galois theory and discrete group torsors and their
trivializations, and to prove a new van Kampen theorem for $\pi_1^{\ets}(\mc{C}, x)$ which simplifies and extends
previous work (in particular \cite{Stix1} and \cite{Z1}) in a new homotopy theoretic direction.

The principal technical device at work here is that of groupoids $\Htor_x$ of \emph{pointed $H$-torsors} for constant
group sheaves $H \deq \Gamma^\ast H$ associated to discrete groups $H$. The characterization of these groupoids as
homotopy fibres by Jardine in \cite{Jardine14} allows one to give precise proofs of facts that previously (to the
author's knowledge) had the status of folklore: in particular, Theorem \ref{charthm} here shows that the pro-groupe
fondamental \'elargi $G$ associated to the full subtopos $\SLC(\mc{C})$ of sums of locally constant objects of
$\Shv{\mc{C}}$ is pro-isomorphic to $\pi_1^{\ets}(\mc{C}, x)$. The subtlety here lies in the identification
\begin{displaymath}
\pi_1^{\ets}(\mc{C}, x) \cong \pi_1^{\ets}(\SLC(\mc{C}), x),
\end{displaymath}
as the latter pro-group is easily shown (Proposition \ref{slcworks}) to be pro-isomorphic to $G$. Artin-Mazur
apparently thought they proved this in \S$10$ of \cite{AM}, but in fact those methods only achieve the
identification of $\pi_1^{\ets}(\mc{C}, x)$ as a representing pro-object in $\pi \Grp$, the category of groups and
homotopy classes of homomorphisms between them, rather than $\Grp$ itself. The proof here is new and requires the
homotopy theoretic characterization of pointed torsors. Pointed torsors may also be used to rigorously establish the
bit of folklore that the profinite completion $\pi_1^{\ets}(\et{X}, x)\,\widehat{\,}\,$ of the nonfinite \'etale
$\pi_1^{\ets}$ of a connected locally noetherian scheme or DM stack is pro-isomorphic to Grothendieck's profinite
fundamental group $\pi_1^{\Gals}(X, x)$ associated to the \emph{finite} \'etale site $\finet{X}$ based at $x$
(Proposition \ref{grothcmp}).

With these issues out of the way, one may study trivializations of pointed torsors and use them to show how
Grothendieck's short exact sequence for Galois descent 
\begin{displaymath}
1 \to \pi_1^{\Gals}(\mc{C}/(Y, y), y) \to \pi_1^{\Gals}(\mc{C}, x) \epi G_Y \to 1
\end{displaymath}
associated to connected Galois objects $(Y, y)$ in pointed Galois categories $(\mc{C}, x, F_x)$ arise naturally from
short exact sequences in pointed nonabelian $H^1_x(\mc{C}, -)$; see Corollary \ref{corgal}. 

The final section of this paper uses the general methods established above to prove a new variant (Corollary
\ref{corevc}) of \'etale van Kampen theorem which is both simple to state (it is just a pushout as in the usual
topological van Kampen theorem) and does \emph{not} require the covering family to consist exclusively of
monomorphisms (as in the case of coverings by open subschemes or substacks; cf. \cite{Z1}). Corollary \ref{corfin}
shows in particular how this result specializes to a statement about Grothendieck's profinite fundamental groups.
The methods employed to prove these statements are conceptual, homotopy theoretic, and in fact give the presumably
stronger Theorem \ref{mainthm} whose statement does not in any direct way depend upon the underlying topology.

\section{Torsors and (geometrically) pointed torsors}

\'Etale homotopy theory begins with the observation that, for suitably nice Grothendieck sites $\mc{C}$, the canonical
constant sheaf functor $\Gamma^\ast: \Set \to \Shv{\mc{C}}$ has a left adjoint $\ccomp: \Shv{\mc{C}} \to \Set$ called
the \emph{connected components} functor. Naturally, this functor has the geometric interpretation of sending a scheme to
its set of scheme-theoretic connected components whenever one is working with some good enough site of schemes with a
subcanonical topology (i.e. all representable presheaves are sheaves).

\subsection{Connectedness and local connectedness}

Recall that a sheaf $F$ on a site $\mc{C}$ is called \emph{connected} if whenever there is a coproduct decomposition
\begin{displaymath}
F = F_1 \sqcup F_2
\end{displaymath}
either $F_1 = \emptyset$ or $F_2 = \emptyset$, where $\emptyset$ denotes the initial sheaf on $\mc{C}$. A site
$\mc{C}$ is called \emph{locally connected} if every sheaf on $\mc{C}$ splits uniquely (up to canonical isomorphism)
as a coproduct of connected sheaves, and if representable sheaves similarly decompose as coproducts of connected
representable sheaves. On the sites of interest here any connected scheme will represent a connected representable
sheaf (see e.g. \cite{Z2}, Lemma $3.3$). It is known that the \'etale sites of locally noetherian Deligne-Mumford
stacks are locally connected ($3.1$, \cite{Z2}). Under these conditions, the aforementioned connected components
functor $\ccomp$ exists (defined by sending each sheaf to its set of connected components) and is easily shown to be
left adjoint to $\Gamma^\ast$. A Grothendieck site $\mc{C}$ with terminal sheaf $\ast$ will be called \emph{connected}
if $\ast$ is connected. However the functor $\ccomp$ may arise, the results below depend only upon its \emph{existence}.

\subsection{Closed model structures and hypercovers}

Say that a category $\mc{C}$ is \emph{small} if its class of morphisms $\Mor{\mc{C}}$ forms a set. Any small
Grothendieck site $\mc{C}$ admits a closed model structure on the associated category of simplicial (pre)sheaves where
the cofibrations are monomorphisms, the weak equivalences are the local weak equivalences, and the fibrations are what
will be called here the \emph{global fibrations}. These closed model structures are due to Joyal in the sheaf case
(\cite{Joyal2}) and Jardine in the presheaf case (\cite{Jardine3}); the reader is encouraged to refer to these papers
for definitions of the terms not defined here. These are known as the ``injective'' model structures, and will
sometimes be used in what follows.

A morphism $f: X \to Y$ of simplicial (pre)sheaves will be called a \emph{local fibration} if it has the local right
lifting property with respect to all the standard inclusions $\khorn \hra \nsimp$ of $k$-horns into the standard
$n$-simplices for $n \geq 0$ (cf. \S$1$ of \cite{Jardine9} for a definition and discussion of the local right lifting
property). A morphism $f: X \to Y$ of simplicial (pre)sheaves will be called a \emph{local trivial fibration} if it is
simultaneously a local fibration and a local weak equivalence; by a theorem of Jardine ($1.12$, \cite{Jardine3}) these
are exactly the morphisms of simplicial (pre)sheaves having the local right lifting property with respect to the
standard inclusions $\dnsimp \hra \nsimp$ of boundaries of the standard $n$-simplices for $n \geq 0$. By a simple
adjointness argument beginning with the observation that $\dnsimp \cong \sk_{n-1}{\nsimp}$, one sees that this local
lifting property is equivalent to the assertion that the (pre)sheaf morphisms
\begin{eqnarray*}
X_0 & \to & Y_0 \\
X_n & \to & \cosk_{n-1}{X_n} \times_{\cosk_{n-1}{Y_n}} Y_n
\end{eqnarray*}
are local epimorphisms for $n \geq 1$. This is true in particular when $Y = K(Z, 0)$, the constant (or ``discrete'')
simplicial (pre)sheaf associated to a (pre)sheaf $Z$. When $Z$ is a scheme and $X$ is representable by a simplicial
scheme this amounts to the classical definition of a hypercover $f: X \to Z$ (cf. \cite{AM}; these observations
appeared in \cite{Jardine8}). For this reason and others it is now standard to call any local trivial fibration of
simplicial (pre)sheaves on a small Grothendieck site $\mc{C}$ a \emph{hypercover}. This is what is meant by the
term``hypercover'' in the remainder of this paper.

\subsection{Torsors and homotopy theory}

Recall that a \emph{torsor} $X$ for a sheaf of groups $G$ on a small Grothendieck site $\mc{C}$ is a sheaf $X$ with an
$G$-action such that there is a sheaf epi $U \epi \ast$ to the terminal sheaf $\ast$ of $\mc{C}$ and a $G$-equivariant
sheaf isomorphism
\begin{displaymath}
X \times U \cong G \times U
\end{displaymath}
called a \emph{trivializaton} of $X$ along $U$. This implies that the sheaf-theoretic quotient $X/G$ is isomorphic to
the terminal sheaf $\ast$. The simplicial sheaf $EG \times_G X$ is defined in sections $U$ as the nerves of the
translation categories $E_{G(U)}(X(U))$ for the actions of $G(U)$ on $X(U)$; each such category is a groupoid so
$\shpi_n(EG \times_G X) \cong 0$ for $n \geq 2$ and, as the isotropy groups locally vanish and the action is locally
transitive there is a local weak equivalence $EG \times_G X \simeq \ast$ (here $\shpi_n$ denotes the sheaf of homotopy
groups in degree $n$; for the definition of a translation category see $1.8$, IV, \cite{GJ}). As there is an isomorphism
of sheaves $\shpi_0(EG \times_G X) \cong X/G$ one sees that $X$ is a $G$-torsor if and only if the map
\begin{displaymath}
EG \times_G X \to \ast
\end{displaymath}
of simplicial sheaves is a local weak equivalence (this is another observation of Jardine; cf. $2.1$, \cite{Jardine4}).
The maps $X \to Y$ of $G$-torsors are $G$-equivariant maps of sheaves, induced as fibres of comparisons of local
fibrations $EG \times_G X \to BG$ (resp. for $Y$), hence are local weak equivalences of constant simplicial sheaves, so
are isomorphisms (following notes of Jardine). The category of $G$-torsors on $\mc{C}$ is therefore a groupoid denoted
$\Gtor(\mc{C})$; its path component set is denoted $H^1(\mc{C}, G)$, and this is the definition of nonabelian $H^1$ of
$\mc{C}$ with coefficients in $G$. This set is pointed by the isomorphism class of the trivial $G$-torsor represented
by $G$ itself.

It has been known at least since \cite{AM} appeared that the \'etale fundamental group $\pi_1^{\ets}(X, x)$ based at
some geometric point $x$ determines $H^1(\mc{C}, H)$ for constant sheaves of discrete groups $H$ where $\mc{C} \deq
\et{X}$, the \'etale site of a connected locally noetherian scheme $X$ pointed by $x$ (i.e. where the covering
families are taken to be surjective sums of \'etale morphisms; here ``\'etale'' is \emph{not} taken to include
``finite''). The following is a quick homotopy-theoretic argument to establish this. The author based it on an earlier
argument of Jardine made in the setting of a Galois category in the sense of (V, \cite{SGA1}). First a lemma:

\begin{lem} \label{connlem}
For any connected small Grothendieck site $\mc{C}$ and any hypercover $U \to \ast$ of the terminal sheaf, one has
\begin{displaymath}
\pi_0(\ccomp U) \cong \ast.
\end{displaymath}
\end{lem}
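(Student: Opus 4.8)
The plan is to compute $\pi_0(\ccomp U)$ by commuting the connected components functor past the formation of path components, reducing everything to a computation of $\shpi_0 U$. Recall that for any simplicial set $S$ the set $\pi_0 S$ is the coequalizer of the two bottom face maps $d_0, d_1 \colon S_1 \rightrightarrows S_0$, and likewise the sheaf $\shpi_0 W$ of a simplicial sheaf $W$ is the coequalizer (now computed in $\Shv{\mc{C}}$) of $d_0, d_1 \colon W_1 \rightrightarrows W_0$, since $\shpi_0 W$ is the sheafification of the sectionwise presheaf $V \mapsto \pi_0(W(V))$ and sheafification preserves this colimit. Since $\ccomp$ is a left adjoint (to $\Gamma^\ast$) it preserves all colimits, in particular coequalizers, and it is applied to $U$ degreewise; hence
\begin{displaymath}
\pi_0(\ccomp U) = \mathrm{coeq}\bigl(\ccomp U_1 \rightrightarrows \ccomp U_0\bigr) \cong \ccomp\,\mathrm{coeq}\bigl(U_1 \rightrightarrows U_0\bigr) = \ccomp(\shpi_0 U).
\end{displaymath}
Thus it suffices to identify the sheaf $\shpi_0 U$ and then apply $\ccomp$.

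Next I would observe that the hypercover $U \to \ast$ is, by definition, a local trivial fibration and in particular a local weak equivalence, so it induces an isomorphism on sheaves of path components, giving $\shpi_0 U \cong \shpi_0 \ast$. The target here is the constant simplicial sheaf on the terminal sheaf, whose face maps are all identities, so $\shpi_0 \ast \cong \ast$, the terminal sheaf of $\mc{C}$. Combining this with the previous display yields $\pi_0(\ccomp U) \cong \ccomp(\ast)$.

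Finally, since $\mc{C}$ is connected the terminal sheaf $\ast$ is a connected sheaf, so by definition it has exactly one connected component and $\ccomp(\ast) \cong \ast$, the one-point set. Hence $\pi_0(\ccomp U) \cong \ast$, as claimed.

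The only step I would be careful about — and the closest thing to an obstacle — is the first one: the identification of $\shpi_0 U$ with the sheaf-theoretic coequalizer of the bottom two face maps, together with the interchange of $\ccomp$ with that coequalizer. Everything there rests on $\ccomp$ being a genuine left adjoint, so that it commutes with the colimit defining $\pi_0$, rather than on any product-preservation property of $\ccomp$ (which would in general fail). This is also why connectedness of the site is used only at the very last step, through $\ccomp(\ast) \cong \ast$, and why no property of $\ccomp$ beyond its mere existence is invoked, in keeping with the earlier remark that the arguments depend only on the existence of the connected components functor.
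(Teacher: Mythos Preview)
Your proof is correct, and it takes a genuinely different route from the paper's own argument. The paper argues ``dually'': it uses the local weak equivalence $U \to \ast$ to identify $[\ast, K(\Gamma^\ast S, 0)] \cong [U, K(\Gamma^\ast S, 0)]$ in the homotopy category, then invokes the injective model structure (everything cofibrant, $K(\Gamma^\ast S, 0)$ globally fibrant) to replace these by simplicial homotopy classes, and finally uses the adjunction $\ccomp \dashv \Gamma^\ast$ to obtain $\Hom(\pi_0(\ccomp U), S) \cong \Hom(\ast, S)$ for all sets $S$. Your argument instead computes $\pi_0(\ccomp U)$ directly: since $\ccomp$ is a left adjoint it commutes with the coequalizer presenting $\pi_0$, so $\pi_0(\ccomp U) \cong \ccomp(\shpi_0 U)$, and the hypercover condition gives $\shpi_0 U \cong \ast$.

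What each approach buys: your argument is more elementary in that it avoids the injective model structure entirely --- in particular it does not need the (nontrivial) fact that constant discrete simplicial sheaves are globally fibrant. The paper's approach, on the other hand, is already phrased in the language of simplicial homotopy classes of maps into classifying objects, which is exactly the setup used in the very next Proposition via the Verdier hypercovering theorem; so its proof of the Lemma dovetails with that later argument. Both rely on connectedness only through $\ccomp(\ast) \cong \ast$, as you note.
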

\begin{proof}
The canonical map $U \to \ast$ induces a map backwards
\begin{displaymath}
[\ast, K(\Gamma^\ast S, 0)] \to [U, K(\Gamma^\ast S, 0)]
\end{displaymath}
in the homotopy category for any set $S$, which is an isomorphism since $U \to \ast$ is a hypercover. On the other hand
$U$ and $\ast$ are cofibrant and $K(\Gamma^\ast S, 0)$ is globally fibrant, so these determine isomorphisms
\begin{displaymath}
\pi(\ast, K(\Gamma^\ast S, 0)) \cong \pi(U, K(\Gamma^\ast S, 0)).
\end{displaymath}
where $\pi$ denotes taking simplicial homotopy classes of maps. One has $\ccomp(\ast) = \ast$ since $\mc{C}$ is
connected, but then by adjunction $\pi(\ast, K(S, 0)) \cong \pi(\ccomp U, K(S, 0))$ so $\Hom(\ast, S) \cong
\Hom(\pi_0(\ccomp U), S)$ for any set $S$. Setting $S = \{0, 1\}$ completes the argument.
\end{proof}
One may paraphrase this by saying that any hypercover of the terminal sheaf on a connected site is automatically
path-connected.

\begin{prop} \label{etfund}
For any connected pointed small Grothendieck site $\mc{C}$ there are bijections
\begin{displaymath}
\pi_{\mathrm{cts}}(\pi_1^{\ets}(\mc{C}), H) \cong H^1(\mc{C}, H)
\end{displaymath}
natural in discrete groups $H$, where $\pi_1^{\ets}(\mc{C})$ is the \'etale fundamental group \`a la Artin-Mazur
constructed by means of pointed (not necessarily representable) hypercovers and $\pi_{\mathrm{cts}}$ is defined as in
the proof.
\end{prop}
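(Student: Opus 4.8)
The plan is to compute both sides through the cofiltered category of pointed hypercovers $U \to \ast$ of the terminal sheaf and to bridge them with the adjunction $\ccomp \dashv \Gamma^\ast$. First I would invoke Jardine's identification of nonabelian cohomology with a hom-set in the homotopy category (cf. $2.1$, \cite{Jardine4}, and \cite{Jardine14}): since $H^1(\mc{C}, H) = \pi_0 \Htor(\mc{C})$ and, by Jardine's characterization of the torsor groupoid as a homotopy fibre, its components compute maps into the classifying object, one has
\[
H^1(\mc{C}, H) \cong [\ast, B(\Gamma^\ast H)]
\]
in $\Ho{\sShv{\mc{C}}}$, where $B(\Gamma^\ast H)$ is the nerve of the constant group sheaf $\Gamma^\ast H$ and $[\,\cdot\,,\,\cdot\,]$ denotes morphisms in the homotopy category.

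Next I would apply the Verdier--Jardine hypercovering theorem to rewrite this hom-set as a filtered colimit of simplicial homotopy classes over the category of hypercovers of $\ast$:
\[
[\ast, B(\Gamma^\ast H)] \cong \colim_{U \to \ast} \pi(U, B(\Gamma^\ast H)),
\]
which is legitimate because $\ast$ is cofibrant and $B(\Gamma^\ast H)$ is locally fibrant, being the nerve of a sheaf of groupoids. The chosen point $x$ lets me restrict the colimit to \emph{pointed} hypercovers, which is exactly the indexing category used to build $\pi_1^{\ets}(\mc{C})$.

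I would then descend the adjunction to homotopy classes. Because $\Gamma^\ast$ is left exact it preserves the products defining the nerve, so $B(\Gamma^\ast H) \cong \Gamma^\ast(BH)$ with $BH = K(H, 1)$ the ordinary classifying complex; and because $\ccomp$ is the leftmost adjoint of a locally connected site it satisfies the projection formula $\ccomp(U \times \Gamma^\ast \Delta^1) \cong \ccomp(U) \times \Delta^1$, so simplicial homotopies are transported and the adjunction yields $\pi(U, \Gamma^\ast BH) \cong \pi(\ccomp U, BH)$. By Lemma \ref{connlem} each $\ccomp U$ is a connected simplicial set, whence the standard computation of free homotopy classes of maps into a $K(H, 1)$ gives
\[
\pi(\ccomp U, BH) \cong \Hom(\pi_1(\ccomp U, x), H)/\Inn(H).
\]
Defining $\pi_{\mathrm{cts}}(\pi_1^{\ets}(\mc{C}), H) \deq \colim_U \Hom(\pi_1(\ccomp U, x), H)/\Inn(H)$, the continuous homotopy classes of homomorphisms out of the pro-group $\pi_1^{\ets}(\mc{C}) = \{\pi_1(\ccomp U, x)\}_U$, and composing the three isomorphisms in the colimit produces the asserted bijection, natural in $H$ since each step is functorial in $H$.

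The step I expect to be the main obstacle is the interface between the hypercovering theorem and the adjunction: I must check that the colimit produced by Verdier's theorem is indexed by precisely the homotopy category of pointed hypercovers defining $\pi_1^{\ets}(\mc{C})$, and that $\ccomp$ genuinely carries the simplicial cylinder $U \times \Delta^1$ to $\ccomp(U) \times \Delta^1$, so that the adjunction isomorphism descends to the quotient sets $\pi$ rather than merely to strict hom-sets. The accompanying bookkeeping of free versus based homotopy classes --- visible as the quotient by $\Inn(H)$ on the group side and as the unbased nature of $H^1$ on the torsor side --- also needs care, although the conjugation action emerges automatically once one works with unpointed maps into $K(H, 1)$.
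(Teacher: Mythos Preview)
Your proposal is correct and follows essentially the same route as the paper's proof: the chain $H^1(\mc{C},H)\cong[\ast,B\Gamma^\ast H]\cong\colim_{(U,u)}\pi(U,B\Gamma^\ast H)\cong\colim_{(U,u)}\pi(\ccomp U,BH)\cong\colim_{(U,u)}\pi(\pi_1(\ccomp U,u),H)$, with the last term taken as the definition of $\pi_{\mathrm{cts}}(\pi_1^{\ets}(\mc{C}),H)$. You run the chain in the opposite direction and are more explicit about the projection formula $\ccomp(U\times\Gamma^\ast\Delta^1)\cong\ccomp(U)\times\Delta^1$ and the cofinality of pointed hypercovers, but these are exactly the ingredients the paper invokes implicitly.
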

\begin{proof}
There is a sequence of identifications
\begin{eqnarray*}
\pi_{\mathrm{cts}}(\pi_1^{\ets}(\mc{C}), H) & \deq  & \colim_{(U, u) \in \HR_\ast(\mc{C})}{\pi(\pi_1(\ccomp U, u), H)} \\
                                            & \cong & \colim_{(U, u) \in \HR_\ast(\mc{C})}{\pi(\ccomp U, BH)} \\
                                            & \cong & \colim_{(U, u) \in \HR_\ast(\mc{C})}{\pi(U, B\Gamma^\ast H)} \\
                                            & \cong & [\ast, B\Gamma^\ast H] \\
                                            & \cong & \pi_0(\Htor(\mc{C})) \deq H^1(\mc{C}, H)
\end{eqnarray*}
where the transition from simplicial homotopy classes of maps to homotopy classes of maps is the generalized Verdier
hypercovering theorem applied to the locally fibrant objects $U$ and $B\Gamma^\ast H$ (cf. \cite{Jardine13}),
$\HR_\ast(\mc{C})$ is the category of (geometrically) pointed hypercovers of the terminal sheaf (implicitly over the
basepoint of $\mc{C}$) and pointed simplicial homotopies between them, $B$ denotes the sectionwise nerve functor, and
$\Htor(\mc{C})$ denotes the groupoid of $H$-torsors associated to the constant sheaf of groups $\Gamma^\ast H$.
\end{proof}
Lemma \ref{connlem} was used here in a subtle way to conflate the fundamental groupoid of $\ccomp U$ with the fundamental
group of $(\ccomp U, u)$, as they are homotopy equivalent (in the sense of taking their nerves) in this case. This argument
applies to \emph{any} topology, not just the \'etale, and so is valid for the analogues of $\pi_1^{\ets}$ in any other
setting where one can talk about hypercovers in a pointed connected site.

The converse question of whether the pointed sets $H^1(\mc{C}, H)$ together with their naturality in discrete groups $H$
together determine $\pi_1^{\ets}(\mc{C})$ is more difficult to answer. Proposition \ref{etfund} may be interpreted as
establishing that the functor
\begin{displaymath}
H^1(\mc{C}, -): \pi \Grp \to \Set
\end{displaymath}
is pro-representable by $\pi_1^{\ets}(\mc{C})$, where $\pi \Grp$ is the category of groups and \emph{simplicial
homotopy classes} of homomorphisms between them (known elsewhere as homomorphisms up to conjugacy or ``exterior''
homomorphisms). As representing pro-objects are unique up to pro-isomorphism ($8.2.4.8$, I, \cite{SGA4}), one knows
that $H^1(\mc{C}, -)$ determines $\pi_1^{\ets}(\mc{C})$ up to pro-isomorphism in $\pi \Grp$, but one does \emph{not}
know at this stage of the argument that it is determined up to pro-isomorphism in $\Grp$. In the case that $H$ need
only vary over \emph{abelian} groups this problem does not arise since the conjugacy relation degenerates.
\emph{Geometrically pointed torsors} (or just ``pointed torsors'' for short) were introduced precisely to fix this
problem wherever Galois theory (in the modern topos theoretic sense) is available.

\section{Pointed torsors and Galois theory}

\subsection{Pointed torsors}

Geometrically pointed torsors may be motiviated by the following simple example from the Galois theory of fields: fix
a base field $k$ and choose a separably closed extension $x: k \hra \Omega$. This determines a geometric point
\begin{displaymath}
x: \Spec{\Omega} \to \Spec{k}
\end{displaymath}
of the associated scheme $\Spec{k}$. Considering any Galois extension $f: k \hra L$ of $k$, the possible lifts $y$ (i.e.
geometric points) making the associated diagram
\begin{displaymath}
\xymatrix{       &  \Spec{L} \ar[d]^f \\
           \Spec{\Omega} \ar[ur]^y \ar[r]_x  &  \Spec{k} }
\end{displaymath}
commute coincide with the possible embeddings $L \hra \Omega$ over $k$, which are of course permuted by the action of
the associated Galois group $\Gal{L/k}$. The scheme $\Spec{L}$ represents a $\Gal{L/k}$-torsor trivializing over
itself; however, if one requires that the Galois action respect a fixed choice of lift (i.e. one makes a fixed choice
of geometric point $y: \Spec{\Omega} \to \Spec{L}$ over $x$) then only the trivial action is possible, as the
associated embedding has been fixed. This property of having no nontrivial pointed automorphisms means that groupoids
of such pointed torsors over $k$ and pointed equivariant maps between them are particularly simple from a homotopy
theoretic point of view: they are disjoint unions of contractible path components.

Suppose $\mc{C}$ is some site of schemes having finite limits and finite coproducts which contains a separably closed
field $\Omega \deq \Spec{\Omega}$. So long as the topology is defined by families of maps that are stable under
pullback, Verdier's criterion from \cite{SGA4}, Expos\'e III applies and determines a site morphism
\begin{displaymath}
x: \mc{C}/\Omega \to \mc{C}/X
\end{displaymath}
associated to any map $x: \Spec{\Omega} \to X$ to some object $X$. If $\mc{C}$ is a site with the \'etale topology then
this further determines a geometric morphism
\begin{displaymath}
x^\ast: \Shv{\et{X}} \leftrightarrows \Shv{\et{\Omega}} \simeq \Set: x_\ast
\end{displaymath}
where one recognizes $x^\ast$ as the functor sending any \'etale sheaf on $X$ to its stalk at the geometric point $x$. 
One readily verifies that the choice of a global section $u: \ast \to x^\ast(U)$ associated to a sheaf $U$ represented
by an object $U \to X$ of $\et{X}$ is equivalent to the choice of a geometric point $u: \Spec{\Omega} \to U$ over $x$. 

More generally, suppose that
\begin{displaymath}
x^\ast: \Shv{\mc{C}} \leftrightarrows \Set: x_\ast 
\end{displaymath}
is a geometric morphism with direct image functor $x_\ast$ for some small Grothendieck site $\mc{C}$; such a pair
$(\mc{C}, x)$ will be called a \emph{pointed site} with basepoint $x$. The functor $x^\ast$ is an inverse image functor
so is exact and thus preserves local weak equivalences: by exactness $x^\ast$ commutes with Kan's $\Exinfn$ functor and
thus it suffices to prove the statement for locally fibrant objects, but then one may factor the map as a local weak
equivalence right inverse to a hypercover followed by a hypercover (cf. e.g. $8.4$, II, \cite{GJ}), and one may directly
check that $x^\ast$ preserves hypercovers by using their definition in terms of coskeleta. As $x^\ast$ is exact it also
preserves the Borel construction $EH \times_H X$ associated to any $H$-torsor $X$ for any sheaf of groups $H$ on
$\mc{C}$, so in summary it preserves the local weak equivalence
\begin{displaymath}
EH \times_H X \xrightarrow{\simeq} \ast
\end{displaymath}
and thus sends $H$-torsors on $\mc{C}$ to $x^\ast H$-torsors in $\Set$.

A \emph{geometrically pointed $H$-torsor} on the pointed site $(\mc{C}, x)$ for a sheaf of groups $H$ is an $H$-torsor
$Y$ on $\mc{C}$ together with a global section $y: \ast \to x^\ast(Y)$. A \emph{morphism} of (geometrically) pointed
$H$-torsors is a morphism of the underlying $H$-torsors that respects the points. Lemma $1$ of \cite{Jardine14}
establishes in particular that the groupoid $\Htor(\mc{C})_x$ of pointed $H$-torsors on $\mc{C}$ is the homotopy fibre
of the map
\begin{displaymath}
\Htor(\mc{C}) \to x^\ast(H)\textrm{-}\Tors(\Set).
\end{displaymath}
Suppose now that $H$ is a \emph{constant} sheaf of groups. Then every pointed $H$-torsor on $\mc{C}$ is locally
constant on $\mc{C}$ so the groupoids $\Htor(\mc{C})_x$ for variable discrete groups $H$ all belong to the subtopos
$\SLC(\mc{C}) \subset \Shv{\mc{C}}$ of sums of locally constant objects of $\Shv{\mc{C}}$. The inverse image $x^\ast:
\Shv{\mc{C}} \to \Set$ then restricts (cf. \cite{Mord2}) to an inverse image
\begin{displaymath}
x^\ast: \SLC(\mc{C}) \to \Set.
\end{displaymath}

\subsection{Digression on Galois theory}

Now assume that $(\mc{C}, x)$ is also connected. In \cite{Mord1}, Moerdijk showed that there are then pointed topos
equivalences
\begin{displaymath}
\SLC(\mc{C}) \simeq \Shv{\mathbf{G/U}} \simeq \mc{B}G
\end{displaymath}
analogous to that of classical Galois theory, where $G$ is a prodiscrete localic group (pro-object in the category of
localic groups, which themselves are group objects in the category of locales). Here one may conflate the diagram $G$
with its limit object $\lim_{}G$ in localic groups (in a sense this is the reason for considering localic rather than
topological groups in this context). The ``prodiscrete'' adjective means that $G$ may be interpeted as a cofiltered
diagram of discrete localic groups, just as classical Galois theory deals with limits of discrete topological groups.
The site $\mathbf{G/U}$ is that of the right cosets $G/U$ for localic open subgroups $U$ of $G$ (these are discrete),
with the obvious $G$-action, and all $G$-equivariant maps between them as $G$-sets, where a $G$-set $Z$ is a set $Z$
with a $G$-action in the localic sense. The topos $\mc{B}G$ is that of all discrete $G$-sets, and in particular the
latter equivalence asserts that any discrete $G$-set of the form $G/U$ for a localic open subgroup $U$ represents a
sheaf on the site $\mathbf{G/U}$, so the topology is subcanonical. It is known that any prodiscrete localic group
corresponds to a pro-group with surjective transition maps, and the canonical maps $G \to G_i$ are all also surjective
($1.4$, \cite{Mord1}).

Under Moerdijk's equivalence the inverse image $x^\ast: \SLC(\mc{C}) \to \Set$ goes to the functor $g^\ast: \mc{B}G \to
\Set$ that forgets the $G$-action \cite{Mord2}; this functor obviously reflects epis so it is faithful and thus the
topos $\mc{B}G$ has enough points, namely $\{g\}$. This point $g$ restricts to a forgetful functor
\begin{displaymath}
u: G/U \to \Set.
\end{displaymath}
Recall that if $\Set$ is equipped with the standard topology where the covering families are surjections then there is
an equivalence $\Shv{\Set} \simeq \Set$ determined by $F \mapsto F(\ast)$ on the one hand and $X \mapsto \Hom(-, X)$ on
the other. Following \cite{Jardine14}, the direct image $u_\ast \simeq g_\ast: \Set \to \Shv{G/U}$ sends any set $X$ to
the sheaf $\Hom(u(-), X)$ on $G/U$, and the left adjoint $u^\ast \simeq g^\ast: \Shv{G/U} \to \Set$ is the left Kan
extension defined by
\begin{displaymath}
u^\ast(F)(\ast) \deq \colim_{\ast/X}{F(X)}
\end{displaymath}
where the index category $\ast/X$ has maps $\ast \to X$ as objects where the $X$ are sets of the form $G/U$ for open
localic subgroups $U$ and morphisms are commutative triangles over morphisms in the category $G/U$. The identity
elements $e: \ast \to G_i$ represent the pro-group $G$ itself in this index category, and this subcategory is cofinal
as usual so one arrives at the useful characterization
\begin{displaymath}
u^\ast(F) = \colim_{i}{F(G_i)}.
\end{displaymath}
The pointed topos $(\Shv{\mathbf{G/U}}, u)$ has enough points since it is pointed equivalent to $(\mc{B}G, g)$, and in
fact the list $\{u\}$ suffices since equivalences of categories are faithful functors.

A word of explanation: intuitively, the subtopos $\SLC(\mc{C})$ captures and isolates the covering space theory of
$\Shv{\mc{C}}$. The category of locally constant \emph{finite} sheaves on the \'etale site of a scheme $X$ is known (by
descent theory) to be equivalent to the \emph{finite} \'etale site of $X$. More generally, any locally constant sheaf
of sets on $\et{X}$ for a scheme $X$ is represented by a (not necessarily finite) \'etale map (cf. $2.2$, Expos\'e IX,
III, \cite{SGA4}), so the connection with geometry is closer than one might expect a priori.

\subsection{Characterization of $\pi_1^{\ets}$} \label{fusection}

Under Moerdijk's equivalence the groupoid $\Htor(\mc{C})_x$ of pointed $H$-torsors over $x$ for any \emph{constant} sheaf
of groups $H$ on a connected site $\mc{C}$ goes to a groupoid $\Htor(\mathbf{G/U})_u$ of pointed $H$-torsors over $u$.
Equivalences of categories preserve path components and pointed torsors over $x$ and $u$ have no nontrivial (pointed)
automorphisms since $x^\ast$ and $u^\ast$ are faithful on $\SLC(\mc{C})$, so there are induced weak equivalences of
groupoids
\begin{displaymath}
\Htor(\mc{C})_x \simeq \Htor(\mathbf{G/U})_u
\end{displaymath}
natural in $H$. It therefore suffices to study the latter class of groupoids for the purpose of computing
$\pi_0(\Htor(\mc{C})_x)$. By Lemma $1$ of \cite{Jardine14} the groupoid $\Htor(\mathbf{G/U})_u$ is the homotopy fibre of
the canonical map
\begin{displaymath}
u^\ast: \Htor(\mathbf{G/U}) \to \Htor(\Set)
\end{displaymath}
The analogue of Corollary $11$ of \cite{Jardine14} in this context determines bijections
\begin{displaymath}
\pi_0(\Htor(\mc{C})_x) \cong \pi_0(\Htor(\mathbf{G/U})_u) \cong \colim_{i \in I}{\Hom(\check{C}(G_i), BH)}
\end{displaymath}
natural in $H$ where $I$ is the indexing category for the prodiscrete localic group $G$ and $\check{C}(G_i)$ is the
simplicial \v{C}ech resolution associated to the $G$-equivariant epi $G_i \epi \ast$. As $H$ is a constant sheaf of
groups (rather than groupoids), Example $13$ of \cite{Jardine14} applies to give bijections
\begin{displaymath}
H^1_x(\mc{C}, H) \deq \pi_0(\Htor(\mc{C})_x) \cong \colim_i{\Hom(G_i, H)}
\end{displaymath}
natural in $H$ and the latter may be further identified with the set $\Hom_{\mathrm{\loc}}(G, H)$ of maps of prodiscrete
localic groups from $G$ to $H$ to make complete the analogy with the case of ordinary torsors discussed above. 

The upshot of all this is summarized in
\begin{prop} \label{prorep}
Suppose $(\mc{C}, x)$ is a connected (thus locally connected) small Grothendieck site pointed by a choice of geometric
morphism $x: \Set \to \Shv{\mc{C}}$ (``point'' in topos language). Then the pro-group $G$ associated to the subtopos
$\SLC(\mc{C})$ of $\Shv{\mc{C}}$ is determined up to pro-isomorphism by the pointed nonabelian $H^1_x(\mc{C}, -)$
functor associated to $x$.
\end{prop}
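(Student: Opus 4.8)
The plan is to recognize that the chain of natural bijections established just above already packages the pro-group $G = \{G_i\}_{i \in I}$ as a pro-representing object for the functor $H^1_x(\mc{C}, -)\colon \Grp \to \Set$, and then to appeal to the uniqueness of such representing pro-objects. Concretely, the computation yields natural isomorphisms $H^1_x(\mc{C}, H) \cong \colim_i \Hom(G_i, H)$, and the right-hand side is by definition $\Hom_{\pro{\Grp}}(G, H)$ evaluated on the constant (discrete) group $H$. Thus $G$ pro-represents $H^1_x(\mc{C}, -)$, and the content of the proposition is that this property pins $G$ down up to pro-isomorphism.

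The essential point --- and the whole reason for introducing geometrically pointed torsors --- is that this naturality holds in $\Grp$, with \emph{honest} group homomorphisms as morphisms, and not merely in $\pi\Grp$ as in Proposition \ref{etfund}. First I would make this precise by recalling that $\Htor(\mc{C})_x$ is a disjoint union of contractible path components: pointed torsors admit no nontrivial pointed automorphisms, so passing to $\pi_0$ discards no information about maps, and the conjugacy ambiguity that plagued the unpointed case simply does not arise. It is at this step that the homotopy-fibre characterization of $\Htor(\mc{C})_x$ from Lemma $1$ of \cite{Jardine14}, together with the faithfulness of $x^\ast$ on $\SLC(\mc{C})$, does the real work; once one knows the groupoid is a sum of contractible components, the identification with $\colim_i \Hom(G_i, H)$ is functorial for genuine homomorphisms rather than homotopy classes of them.

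With pro-representation in $\Grp$ in hand, I would conclude by invoking the uniqueness of pro-representing objects ($8.2.4.8$, I, \cite{SGA4}): any two pro-objects pro-representing the same $\Set$-valued functor are canonically pro-isomorphic. Applied to our situation, this shows that $G$ is determined up to pro-isomorphism in $\pro{\Grp}$ by the functor $H^1_x(\mc{C}, -)$, which is exactly the assertion of the proposition. The main obstacle is therefore not the concluding formal step but rather securing genuine naturality in $\Grp$; since this has already been arranged by the contractibility of the pointed-torsor components, the proof reduces to assembling the pieces above and citing the uniqueness statement.
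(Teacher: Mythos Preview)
Your proposal is correct and follows essentially the same approach as the paper: the paper's proof is a two-line observation that the preceding discussion shows $G$ pro-represents $H^1_x(\mc{C}, -)$ in $\Grp$, and then invokes uniqueness of representing pro-objects (the same SGA4 reference you cite). Your version is more expansive in spelling out why the naturality lands in $\Grp$ rather than $\pi\Grp$, but the logical structure is identical.
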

\begin{proof}
After the above discussion one must only note that $H^1_x(\mc{C}, -)$ is pro-representable by $G$ and that representing
pro-objects are unique up to pro-isomorphism.
\end{proof}

The pro-group $G$ here is exactly what Grothendieck called the ``pro-groupe fondamental \'elargi'' based at $x$ (pg.
$110$, Book $2$, \cite{SGA3}). The expected explicit identification of $G$ is given by $G \cong \pi_1^{\ets}(\mc{C},
x)$ where the latter group is the fundamental pro-group of Artin-Mazur defined by means of pointed representable
hypercovers. The standard reference for this identification is \S$10$ of \cite{AM}, but unfortunately the argument
there boils down to the characterization of \emph{un}pointed nonabelian $H^1(|\ccomp K|, H)$ for hypercovers $K$ in
classical topological covering space theory, so it cannot be considered to give a complete characterization of $G$. In
other words, it runs into the same ``homomorphisms up to simplicial homotopy'' versus ``actual homomorphisms'' problem
mentioned after Proposition \ref{etfund}.

The statement $\pi_1^{\ets}(\et{X}, x) \cong G$ for small \'etale sites $\et{X}$ of connected locally noetherian
schemes or DM stacks seems to have also been considered in (pg. $111$, Book $2$, \cite{SGA3}). There Grothendieck is
considering \emph{pointed} $H$-torsors for constant group sheaves $H$, and his argument reduces to the explicit
description of $\Hom(G, H) \cong H^1_x(\mc{C}, H)$ in terms of pointed descent data associated to \v{C}ech resolutions
$\check{C}(U)$ for the various \'etale coverings $U \epi \ast$. Unfortunately he gives no proof of the crucial last
lines of his argument, so to the author's knowledge there is a gap in the literature. One may fill this gap as follows.

Fix a pointed small Grothendieck site $(\mc{C}, x)$, a constant sheaf of groups $H$ on $\mc{C}$ with stalk also denoted
$H$, and a pointed representable sheaf epi $(U, u) \epi \ast$ of $\Shv{\mc{C}}$. Let $?_{|U}: \Shv{\mc{C}} \to
\Shv{\mc{C}/(U, u)}$ denote the corresponding restriction functor; this functor is exact (right adjoint of a topos
morphism and it preserves sheaf epis), so it sends pointed $H$-torsors on $\mc{C}$ (with respect to $x$) to pointed
$H$-torsors (with respect to $u$) on $\mc{C}/(U, u)$, and morphisms of pointed $H$-torsors on $\mc{C}$ to morphisms of
pointed $H$-torsors on $\mc{C}/(U, u)$. Let $F_U$ denote the homotopy fibre of the induced map
\begin{displaymath}
\Htor_x \xrightarrow{?_{|U}} \Htor_u
\end{displaymath}
of groupoids where $\Htor_x$ is the groupoid of pointed $H$-torsors on $\mc{C}$ with respect to $x$ and $\Htor_u$ is
the groupoid of pointed $H$-torsors on $\mc{C}/(U, u)$ with respect to $u$. Then the objects of $F_U$ are morphisms $H
\to T_{|U}$ of \emph{pointed} $H$-torsors (the trivial $H$-torsor $H$ being pointed by $e \in H$) and the morphisms are
commutative triangles
\begin{displaymath}
\xymatrix{   &  T_{|U} \ar[d]^{m_{|U}} \\
            H \ar[ur] \ar[r] &  T^\prime_{|U} }
\end{displaymath}
of morphisms of pointed $H$-torsors on $\mc{C}/(U, u)$ where $m: T \to T^\prime$ is a morphism of pointed $H$-torsors
on $\mc{C}$. The objects of $F_U$ correspond exactly to pointed trivializations $\sigma: (U, u) \to T$ so that $F_U$ is
equivalent to the groupoid $F_U$ whose objects are pointed trivializations of the form $\sigma$ and whose morphisms are
commutative triangles
\begin{displaymath}
\xymatrix{      &  T \ar[d]^m \\
           (U, u) \ar[r]^(0.6){\sigma^\prime} \ar[ur]^{\sigma} & T^\prime }
\end{displaymath}
of pointed maps in $\mc{C}$ where $m$ is a morphism of pointed $H$-torsors. The map $F_U \to \Htor_x$ is that which
forgets the pointed trivializations.

\begin{lem} \label{cident}
With the definitions above, there are bijections
\begin{displaymath}
\pi_0(F_U) \cong \Hom(\check{C}(U, u), BH)
\end{displaymath}
natural in discrete groups $H$, where $\check{C}(U, u)$ is the pointed \v{C}ech resolution associated to the pointed
sheaf epi $(U, u) \epi \ast$ of $\mc{C}$.
\end{lem}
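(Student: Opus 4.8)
The plan is to set up the classical torsor--cocycle dictionary in its \emph{pointed} form and then check that the resulting assignment is constant on the path components of $F_U$. First I would observe that a pointed trivialization $\sigma\colon (U,u) \to T$ trivializes $T$ over $U$: since an $H$-torsor admitting a section over a base is trivial there, $\sigma$ exhibits an $H$-equivariant isomorphism $H \times U \cong T_{|U}$ carrying $e$ to $\sigma$. Pulling $\sigma$ back along the two projections $p_0, p_1\colon U \times_\ast U \to U$ produces two sections of $T$ over $U \times_\ast U$, and freeness of the $H$-action yields a unique $g \in H(U \times_\ast U) = \Hom(U \times_\ast U, \Gamma^\ast H)$ with $\sigma p_1 = (\sigma p_0)\cdot g$. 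The simplicial identities over $U \times_\ast U \times_\ast U$ amount precisely to the cocycle condition, so $g$ assembles into a simplicial sheaf map $\check{C}(U) \to BH$; evaluating the defining equation at the basepoint $(u,u)$ forces $g(u,u) = e$, so this map is pointed, i.e.\ a map $\check{C}(U, u) \to BH$.

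Next I would check that this assignment is constant on path components, so that it descends to a natural map $\Phi\colon \pi_0(F_U) \to \Hom(\check{C}(U,u), BH)$. A morphism in $F_U$ is an $H$-equivariant $m\colon T \to T'$ with $m\sigma = \sigma'$; equivariance gives $\sigma' p_1 = m(\sigma p_1) = m((\sigma p_0)\cdot g) = (\sigma' p_0)\cdot g$, so $m$ preserves the cocycle and $g' = g$. Hence $\Phi$ is well defined.

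For surjectivity I would run the descent construction in reverse: given a pointed cocycle $g$, glue the trivial torsor $H \times U$ to itself along $g$ over $U \times_\ast U$ to obtain a sheaf $T_g$ with $H$-action and a tautological section $\sigma_g$; since $U \epi \ast$ is a sheaf epimorphism and $T_g$ is locally (over $U$) isomorphic to $H$, it is a genuine $H$-torsor, pointed by $\sigma_g(u)$, with $\Phi[T_g,\sigma_g] = g$. For injectivity, if $(T,\sigma)$ and $(T',\sigma')$ share the cocycle $g$, then the trivializations $H \times U \cong T_{|U}$ and $H \times U \cong T'_{|U}$ differ over $U \times_\ast U$ by the same $g$, so they glue to an $H$-equivariant isomorphism $m\colon T \to T'$ with $m\sigma = \sigma'$, which is a morphism in $F_U$. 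Thus the fibre of $\Phi$ over each $g$ is a single path component and $\Phi$ is a bijection. Naturality in $H$ is immediate, since a homomorphism $H \to H'$ acts compatibly by pushforward of torsors and by postcomposition with $BH \to BH'$.

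The main obstacle will be the sheaf-theoretic descent underlying surjectivity and injectivity: verifying that the object $T_g$ glued from $g$ is actually a sheaf and a locally trivial $H$-torsor, and that the comparison $m$ assembled from the two local trivializations is a well-defined \emph{global} morphism of sheaves rather than merely a compatible system over $U$. This is exactly where one must invoke the sheaf condition for $U \epi \ast$ together with the cocycle identity. The remaining bookkeeping---matching the basepoint of $T$, the point $u$ of $U$, and the basepoint vertex of $BH$---is routine once the unpointed dictionary is in place, and is precisely what the word ``pointed'' in the statement is tracking.
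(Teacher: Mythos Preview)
Your argument is correct and complete: the classical torsor--cocycle dictionary, carried out in its pointed form exactly as you do, yields the bijection. The paper, however, proceeds differently. Rather than building the descent datum by hand, it invokes Jardine's cocycle-category machinery: by Lemma~$2$ of \cite{Jardine14} the path components of $F_U$ are identified with those of the pointed \v{C}ech cocycle category $h_{\check{C}}(\ast, BH)_x$ under $(U,u)$; each pointed trivialization $\sigma$ determines a cocycle $\ast \xleftarrow{\simeq} \check{C}(U,u) \xrightarrow{\sigma_\ast} \check{C}(T) \to BH$ which is initial in its component, so the components are parametrized by maps $\check{C}(U,u) \to BH$; surjectivity is then a one-line observation. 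Your route is more elementary and self-contained, at the cost of having to verify the sheaf-theoretic gluing for $T_g$ and for the comparison isomorphism $m$ explicitly---precisely the ``main obstacle'' you flag. The paper's route hides that verification inside the general equivalence between torsors and cocycles already established in \cite{Jardine4, Jardine14}, which is shorter here but presupposes that machinery. One minor remark: since $BH$ has a single vertex, every simplicial map $\check{C}(U) \to BH$ is automatically pointed, so your observation that $g(u,u)=e$ is a consistency check rather than an extra constraint on the hom-set; the basepoint $u$ enters only in pointing the reconstructed torsor $T_g$ via $\sigma_g(u)$.
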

\begin{proof}
By Lemma $2$ of \cite{Jardine14} the path components of $F_U$ correspond to the path components of the cocycle
category $h_{\check{C}}(\ast, BH)_x$ of those pointed \v{C}ech cocycles under $(U, u)$. Each pointed trivialization
$\sigma: (U, u) \to T$ determines a pointed \v{C}ech cocycle
\begin{displaymath}
\ast \xleftarrow{\simeq} \check{C}(U, u) \xrightarrow{\sigma_\ast} \check{C}(T) \to BH.
\end{displaymath}
Such cocycles are initial in their respective path components of $h_{\check{C}}(\ast, BH)_x$, so these path
components correspond to certain maps $\check{C}(U, u) \to BH$. Any element in $\Hom(\check{C}(U, u), BH)$
determines a pointed $H$-torsor $T$ equipped with a fixed pointed trivialization $\sigma$ by $(U, u)$, so this
correspondence is surjective, hence bijective.
\end{proof}

The category $E_x$ of pointed sheaf epis $(U, u) \epi \ast$ on $\mc{C}$ certainly has products, and for the sake of
torsor trivialization arguments one may assume that parallel maps
\begin{displaymath}
\xymatrix{ (V, v) \ar@<0.5ex>[rr] \ar@<-0.5ex>[rr] \ar@{->>}[dr] &  &  (U, u) \ar@{->>}[dl] \\
           & \ast & }
\end{displaymath}
are identified so that $E_x$ is (co)filtered.

Say that a pointed $H$-torsor $(T, t)$ \emph{admits a pointed trivialization} if there exists a pointed representable
sheaf epi $(U, u) \epi \ast$ and a pointed section $\sigma: (U, u) \to T$. Pointed trivializations exist in most cases
of interest:

\begin{lem} \label{torstriv}
Suppose $(\mc{C}, x)$ is a pointed small Grothendieck site with a subcanonical topology (i.e. representable presheaves
are sheaves) and arbitrary small coproducts. Then any pointed $H$-torsor for a sheaf of groups $H$ on $\mc{C}$ admits
a pointed trivialization by a representable sheaf epi.
\end{lem}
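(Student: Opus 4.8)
The plan is to produce first an ordinary (unpointed) trivialization of $T$ along a representable sheaf epi, and then to promote it to a pointed one by exploiting the exactness of the inverse image functor $x^\ast$. The starting observation is that, since $T$ is an $H$-torsor, the structure map $T \to \ast$ is a sheaf epimorphism: locally $T \cong H$, and $H \to \ast$ is a split epi via the identity section $e: \ast \to H$, so $T \epi \ast$. Moreover the identity of $T$ is a tautological section of $T$ over itself, so $T$ is trivial over the (non-representable) epi $T \epi \ast$. Thus everything for the unpointed statement reduces to replacing $T$ by a representable base.

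For that step I would use that representable sheaves generate $\Shv{\mc{C}}$: the canonical map $\coprod_i U_i \to T$ from a coproduct of representables $U_i$, indexed over a small generating set of maps (small because $\mc{C}$ is small), is an epimorphism. Here both standing hypotheses enter. Subcanonicity guarantees that each representable presheaf is a sheaf, so Yoneda applies and these are genuine sheaf maps; the existence of arbitrary small coproducts in $\mc{C}$ guarantees that this coproduct of representables is again representable, say by $U \deq \coprod_i U_i$. Composing the resulting epi $U \epi T$ with $T \epi \ast$ shows $U \epi \ast$, and the map $\sigma: U \to T$ is a trivialization of $T$ along the representable epi $U \epi \ast$: giving such a $\sigma$ is equivalent to giving a trivialization in the sense of the definition, since the pulled-back torsor $T \times U \to U$ then carries the section $(\sigma, \mathrm{id}_U)$ and a torsor with a section is trivial.

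Finally I would install the basepoint. The chosen point of the torsor is a global section $t: \ast \to x^\ast(T)$, and I want $u: \ast \to x^\ast(U)$ with $x^\ast(\sigma)(u) = t$. Because $x^\ast$ is an inverse image functor it is exact (as already noted in the text), hence carries the epimorphism $\sigma: U \epi T$ to a surjection of sets $x^\ast(\sigma): x^\ast(U) \epi x^\ast(T)$; I may therefore lift $t$ along this surjection to obtain the desired $u$. Then $(U, u) \epi \ast$ is a pointed representable sheaf epi and $\sigma: (U, u) \to T$ is a pointed trivialization, which is exactly the assertion. The only genuinely delicate points are the two uses of the hypotheses: that arbitrary small coproducts make a coproduct of representables representable (so that the unpointed trivialization can be taken over a representable base), and that exactness of $x^\ast$ preserves the epimorphism $U \epi T$ (which is precisely what lets the given point $t$ be lifted to a compatible $u$). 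I expect the representability reduction to be the main thing to justify carefully; the pointing step is then immediate from exactness.
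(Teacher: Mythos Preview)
Your proposal is correct and follows essentially the same route as the paper: both arguments show $T \to \ast$ is an epi, cover $T$ by a small coproduct of representables $U = \sqcup_i U_i$ (using subcanonicity and the existence of small coproducts to get $U$ representable), take the composite $U \epi T \epi \ast$ as the trivializing representable epi, and then lift the basepoint $t \in x^\ast(T)$ to some $u \in x^\ast(U)$ using exactness of $x^\ast$. The only cosmetic difference is that you argue $T \to \ast$ is epi directly from local triviality of the torsor, while the paper deduces it from the existence of some trivializing epi $V \epi \ast$ factoring through $T$; your justification that $\sigma: U \to T$ really is a trivialization (via the section $(\sigma, \mathrm{id}_U)$ of the pulled-back torsor) is more explicit than the paper's, which simply asserts it.
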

\begin{proof}
Any pointed $H$-torsor $(T, t)$ admits some ordinary trivialization $\sigma: V \to T$ from a sheaf epi $V \epi \ast$,
but then the composite $V \to T \to \ast$ is an epi so that the canonical map $T \to \ast$ is also an epi. As a sheaf,
$T$ is a colimit of representable presheaves $U_i$ for some small index category $I$, which are sheaves since the
topology is subcanonical, and there is a sheaf epi $U \deq \sqcup_{i \in I}{U_i} \epi T$ so that the composite $U \to
T \to \ast$ is a sheaf epi. In particular the map $U \epi T$ is a representable trivialization of $T$, and as it is a
sheaf epi, any point $t \in x^\ast(T)$ lifts to some point $u \in x^\ast(U)$, so that $(U, u) \epi T$ is a pointed
trivialization of $T$.
\end{proof}

This is true in particular for pointed sites of connected locally noetherian schemes and DM stacks with nonfinite
\'etale topologies. The homotopy long exact sequence associated to the fibre sequence
\begin{displaymath}
F_U \to \Htor_x \to \Htor_u
\end{displaymath}
in groupoids for a representable pointed sheaf epi $(U, u) \epi \ast$ has the form
\begin{displaymath}
1 \to \pi_1(\Htor_u) \to \pi_0(F_U) \to \pi_0(\Htor_x) \to \pi_0(\Htor_u).
\end{displaymath}
for any \emph{constant} sheaf of groups $H$ on a pointed connected site $(\mc{C}, x)$, since the objects of $\Htor_x$
have no pointed automorphisms. The trivial pointed torsor $H$ on $\mc{C}/(U, u)$ is represented by $H \times U$ pointed
by $e \times u$ over $x$ with its canonical pointed projection $\pr_u: H \times U \to U$. The sheaf $H \times U$ is
obviously locally constant since $H$ is constant, so it lives in $\SLC(\mc{C})$, but then it has no nontrivial pointed
$(U, u)$-automorphisms as any such must be the identity on $x^\ast$, and $x^\ast$ is faithful for $\SLC(\mc{C})$ by
Galois theory. Thus $\pi_1(\Htor_u)$ vanishes at the basepoint given by the trivial $H$-torsor, so the sequence above
always reduces to an exact sequence
\begin{displaymath}
1 \to \pi_0(F_U) \to \pi_0(\Htor_x) \to \pi_0(\Htor_u).
\end{displaymath}

\begin{lem}
With the definitions above there are bijections
\begin{displaymath}
\colim_{(U, u) \in E_x}{\pi_0(F_U)} \cong \pi_0(\Htor_x) \deq H^1_x(\mc{C}, H)
\end{displaymath}
natural in discrete groups $H$ whenever pointed $H$-torsors on $(\mc{C}, x)$ admit pointed trivializations by
representable sheaf epis for all constant sheaves of discrete groups $H$.
\end{lem}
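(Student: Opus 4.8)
The plan is to realize each $\pi_0(F_U)$ as a subset of the fixed set $\pi_0(\Htor_x)$ and then identify the colimit with a directed union of these subsets. First I would use the reduced exact sequence
\begin{displaymath}
1 \to \pi_0(F_U) \to \pi_0(\Htor_x) \to \pi_0(\Htor_u)
\end{displaymath}
established above for each representable pointed sheaf epi $(U, u) \epi \ast$. Exactness at the first nontrivial term, together with the vanishing of $\pi_1(\Htor_u)$ noted before it, shows that the forgetful map $\pi_0(F_U) \to \pi_0(\Htor_x)$, sending the class of a pointed trivialization $\sigma \colon (U, u) \to T$ to the class $[T]$ of its underlying pointed torsor, is \emph{injective}. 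Its image is the kernel of the restriction map $\pi_0(\Htor_x) \to \pi_0(\Htor_u)$, which consists of exactly those classes $[T]$ for which $T_{|U}$ is the trivial pointed $H$-torsor on $\mc{C}/(U, u)$, i.e. those $T$ admitting a pointed trivialization by $(U, u)$. I would therefore regard $\pi_0(F_U)$ as the subset of $\pi_0(\Htor_x)$ consisting of the classes trivializable by $(U, u)$.

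Next I would verify that the structure maps of the colimit diagram are precisely the inclusions of these subsets. A morphism $(V, v) \to (U, u)$ in $E_x$ induces, by further restriction along $\mc{C}/(V, v) \to \mc{C}/(U, u)$, a commuting triangle $\Htor_x \to \Htor_u \to \Htor_v$ and hence a comparison $F_U \to F_V$ of homotopy fibres compatible with both forgetful maps down to $\Htor_x$; on path components this is simply the inclusion $\pi_0(F_U) \subseteq \pi_0(F_V)$, reflecting that a pointed trivialization by $(U, u)$ pulls back to one by $(V, v)$. Because $E_x$ has products, any two epis $(U, u)$ and $(U', u')$ are refined by $(U, u) \times (U', u')$, whose associated subset contains both $\pi_0(F_U)$ and $\pi_0(F_{U'})$; thus the $\pi_0(F_U)$ form a filtered (directed) system of subsets of $\pi_0(\Htor_x)$ with inclusions as transition maps. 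A filtered colimit of sets along injections is their union, so $\colim_{(U, u) \in E_x} \pi_0(F_U)$ is canonically the union of all these subsets inside $\pi_0(\Htor_x)$, i.e. the set of classes $[T]$ admitting a pointed trivialization by \emph{some} object of $E_x$.

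It then remains only to invoke the standing hypothesis: every pointed $H$-torsor on $(\mc{C}, x)$ admits a pointed trivialization by a representable sheaf epi, and any such epi is an object of $E_x$. Hence the union just described exhausts $\pi_0(\Htor_x)$, and the inclusion of the colimit into $\pi_0(\Htor_x)$ is a bijection
\begin{displaymath}
\colim_{(U, u) \in E_x} \pi_0(F_U) \cong \pi_0(\Htor_x) \deq H^1_x(\mc{C}, H).
\end{displaymath}
Naturality in $H$ is inherited from the naturality in $H$ of the fibre sequences $F_U \to \Htor_x \to \Htor_u$ and of the restriction functors, together with the naturality already recorded in Lemma \ref{cident}.

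The only genuinely delicate point, and the step I expect to require the most care, is the bookkeeping that turns the abstract colimit into a union: one must pin down the variance so that $(U, u) \mapsto \pi_0(F_U)$ is a filtered system of \emph{subobjects} of the single set $\pi_0(\Htor_x)$, and check that the exact-sequence identification is natural enough to be simultaneously compatible with all transition maps. The essential input making this possible is the assumed triviality of $\pi_1(\Htor_u)$: without it the forgetful maps $\pi_0(F_U) \to \pi_0(\Htor_x)$ would fail to be injective, the subsets would have to be replaced by quotients, and the clean ``colimit equals union'' argument would break down.
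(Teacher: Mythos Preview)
Your argument is correct and uses the same ingredients as the paper, but organizes them somewhat differently. The paper takes the filtered colimit of the exact sequences $1 \to \pi_0(F_U) \to \pi_0(\Htor_x) \to \pi_0(\Htor_u)$ directly, obtains surjectivity from $\colim_{E_x}\pi_0(\Htor_u) \cong \ast$ (every pointed torsor over some $(U,u)$ eventually trivializes further down), and then argues injectivity of the colimit map by invoking Lemma~\ref{gpdlem} after checking that $\pi_1(\Htor_x) \to \pi_1(\Htor_u)$ is surjective in the colimit at \emph{every} basepoint. You instead use the injectivity of each individual $\pi_0(F_U) \hookrightarrow \pi_0(\Htor_x)$, already available from the reduced exact sequence immediately preceding the lemma, to realize the diagram as a directed system of subsets of the fixed set $\pi_0(\Htor_x)$, whence the colimit is their union; the trivialization hypothesis then gives exhaustion.

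Your route is more elementary in that it never needs to revisit Lemma~\ref{gpdlem} or worry about $\pi_1$ at non-base basepoints: the single vanishing $\pi_1(\Htor_u, H) = 1$ at the trivial torsor suffices, because that is the basepoint over which $F_U$ is taken, and vanishing of $\pi_1$ of the base at that point already forces $\pi_0(F_U) \to \pi_0(\Htor_x)$ to be injective via the standard action description of exactness at $\pi_0(F)$. The paper's route, on the other hand, makes the surjectivity step more visibly a statement about the colimit of the targets $\pi_0(\Htor_u)$ collapsing, which ties in neatly with the later use of these sequences. Either way the content is the same; your verification that the transition maps are inclusions compatible with the fixed embedding into $\pi_0(\Htor_x)$ is exactly the bookkeeping the paper leaves implicit.
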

\begin{proof}
Taking the filtered colimit over $E_x$ of the homotopy short exact sequences of pointed sets above, one obtains a short
exact sequence
\begin{displaymath}
1 \to \colim_{E_x}{\pi_0(F_U)} \to \pi_0(\Htor_x) \to \ast
\end{displaymath}
of pointed sets since one has $\colim_{E_x}{\pi_0(\Htor_u)} \cong \ast$ as every pointed $H$-torsor on any $\mc{C}/(U,
u)$ pointed trivializes on a sufficiently fine choice of pointed sheaf epi $(V, v) \epi \ast$ dominating $(U, u)$. The
middle map is therefore surjective by exactness. The filtered colimit of the maps $\pi_1(\Htor_x) \to \pi_1(\Htor_u)$ for
\emph{any} choice of basepoint of $\Htor_x$ is surjective since all non-trivial torsors eventually pointed trivialize and
one knows that the trivial $H$-torsor on any $(U, u)$ has no nontrivial pointed automorphisms by the above discussion.
Therefore the colimit map $\pi_0(\colim_{E_x}{F_U}) \to \pi_0(\Htor_x)$ must be injective by Lemma \ref{gpdlem} below.
The middle map must therefore be bijective, as was to be shown.
\end{proof}

The following Theorem gives the desired characterization of $G$:
\begin{thm} \label{charthm}
Suppose $(\mc{C}, x)$ is a connected, pointed small Grothendieck site with finite limits and arbitrary small
coproducts where pointed $H$-torsors admit pointed trivializations by representable sheaf epis for all constant
sheaves of discrete groups $H$. Then the pro-groupe fondamental \'elargi $G$ associated to the full subtopos
$\SLC(\mc{C})$ of $\Shv{\mc{C}}$ via Galois theory is pro-isomorphic to the \'etale fundamental pro-group
$\pi_1^{\ets}(\mc{C}, x)$ constructed by means of pointed representable \v{C}ech hypercovers.
\end{thm}
\begin{proof}
The strategy is to demonstrate that $\pi_1^{\ets}(\mc{C}, x)$ pro-represents the functor $H^1_x(\mc{C}, -)$ so that it
must be pro-isomorphic to $G$. By the previous two Lemmas one has identifications
\begin{eqnarray*}
\Hom_{\mathrm{cts}}(\pi_1^{\ets}(\mc{C}, x), H) & \cong & \colim_{E_x}{\Hom(\pi_1(\ccomp \check{C}(U, u)), H)} \\
  & \cong & \colim_{E_x}{\Hom(\check{C}(U, u), B\Gamma^\ast H)} \\
  & \cong & \colim_{E_x}{\pi_0(F_U)} \\
  & \cong & \pi_0(\Htor_x) \deq H^1_x(\mc{C}, H)
\end{eqnarray*}
natural in discrete groups $H$, so the result follows.
\end{proof}

In particular the above result does not assume $\mc{C}$ has the \'etale topology (despite the use of the word
``\'etale'' which is present here for historical reasons) and makes no use of descent theory, although in retrospect
it does show that descent-theoretic methods may be applied to the study of $\pi_1^{\ets}(\mc{C}, x)$ in suitable
geometric settings. The following comparison result shows that it suffices to use sheaf-theoretic hypercovers:

\begin{prop} \label{shvhyp}
Suppose $(\mc{C}, x)$ is a pointed connected small Grothendieck site with a subcanonical topology (i.e. representable
presheaves are sheaves), finite limits, and arbitrary small coproducts. Then there is a pro-isomorphism
\begin{displaymath}
\pi_1^{\ets}(\mc{C}, x) \cong \pi_1^{\ets}(\mc{C}, x)^{\mathrm{rep}}
\end{displaymath}
where the lefthand side is the fundamental pro-group defined by means of pointed hypercovers and the righthand side is
the classical fundamental pro-group defined by means of pointed \emph{representable} hypercovers.
\end{prop}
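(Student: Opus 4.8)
The plan is to show that the left-hand pro-group $\pi_1^{\ets}(\mc{C}, x)$, assembled from \emph{all} pointed sheaf-theoretic hypercovers, pro-represents the pointed nonabelian functor $H^1_x(\mc{C}, -)\colon \Grp \to \Set$ through actual continuous homomorphisms $\Hom_{\mathrm{cts}}$, exactly as the representable version does. Since representing pro-objects are unique up to pro-isomorphism ($8.2.4.8$, I, \cite{SGA4}), this forces the two pro-groups to be pro-isomorphic. The right-hand side is already in hand: by Theorem \ref{charthm} the representable pro-group pro-represents $H^1_x(\mc{C}, -)$ and is pro-isomorphic to $G$, the stated hypotheses of subcanonicity, finite limits, and arbitrary small coproducts being precisely what make Lemma \ref{torstriv} available, so that every pointed $H$-torsor pointed-trivializes along a representable sheaf epi and the pointed \v{C}ech resolutions $\check{C}(U, u)$ suffice to compute $H^1_x$ on the representable side. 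It therefore remains to carry out the analogous computation over the larger category $\HR_\ast(\mc{C})$ of all pointed hypercovers.

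First I would write down, for the left-hand side, the chain of natural bijections
\begin{eqnarray*}
\Hom_{\mathrm{cts}}(\pi_1^{\ets}(\mc{C}, x), H) & \deq  & \colim_{(U, u) \in \HR_\ast(\mc{C})}{\Hom(\pi_1(\ccomp U, u), H)} \\
  & \cong & \colim_{(U, u) \in \HR_\ast(\mc{C})}{\pi(\ccomp U, BH)_x} \\
  & \cong & \colim_{(U, u) \in \HR_\ast(\mc{C})}{\pi(U, B\Gamma^\ast H)_x} \\
  & \cong & [\ast, B\Gamma^\ast H]_x \\
  & \cong & \pi_0(\Htor_x) \deq H^1_x(\mc{C}, H)
\end{eqnarray*}
natural in discrete groups $H$, where a subscript $x$ records that one works with \emph{pointed} homotopy classes. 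The first bijection is Lemma \ref{connlem}: since $\pi_0(\ccomp U) \cong \ast$, each $\ccomp U$ is connected, so its pointed fundamental group carries all the pointed homotopy content and pointed simplicial homotopy classes of maps into $BH = K(H, 1)$ coincide with honest homomorphisms $\pi_1(\ccomp U, u) \to H$, rather than the conjugacy classes of the unpointed Proposition \ref{etfund}. The second bijection is the adjunction $\ccomp \dashv \Gamma^\ast$ applied to pointed objects. The third is the pointed form of the generalized Verdier hypercovering theorem (\cite{Jardine13}), collapsing the filtered colimit, and the last is Jardine's identification of pointed cocycles with pointed torsors (Lemmas $1$ and $2$ of \cite{Jardine14}), exactly as invoked in Lemma \ref{cident}.

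The hard part will be the third bijection: the \emph{pointed} generalized Verdier hypercovering theorem for arbitrary hypercovers, namely that the filtered colimit over $\HR_\ast(\mc{C})$ of pointed simplicial homotopy classes into $B\Gamma^\ast H$ computes the pointed set $[\ast, B\Gamma^\ast H]_x$ and not the unpointed set of Proposition \ref{etfund}. This is precisely the ``actual homomorphisms'' versus ``exterior homomorphisms'' distinction that the pointed-torsor apparatus was built to overcome, and it is resolved by the same device: connectedness of each $\ccomp U$ (Lemma \ref{connlem}) keeps the basepoint data coherent along the cofiltered indexing category, so the pointing survives the colimit. (Alternatively, one could argue directly that under these hypotheses representable pointed hypercovers are cofinal among all pointed hypercovers, giving the pro-isomorphism of index diagrams without passing through $H^1_x$.) A secondary point demanding care is that every bijection in the chain be natural in $H$, so that the composite is an isomorphism of \emph{functors} $\Grp \to \Set$, which is what uniqueness of representing pro-objects consumes. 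Granting these, both pro-groups pro-represent $H^1_x(\mc{C}, -)$, and the asserted pro-isomorphism is immediate.
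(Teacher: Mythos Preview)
Your proposed route is genuinely different from the paper's, and the difference matters. The paper does \emph{not} pass through pro-representability of $H^1_x$; it argues directly that pointed representable hypercovers are cofinal in $\HR_\ast(\mc{C})$. Concretely: any sheaf epi $U \epi \ast$ is dominated by a representable one (write $U$ as a colimit of representables $U_i$, use subcanonicity and small coproducts to get $\sqcup_i U_i \epi U$), and this domination can be made pointed because $x^\ast$ preserves epis. The inductive construction of Lemma $2.2$ of \cite{Jardine8} then upgrades this to: every pointed hypercover is dominated by a pointed representable hypercover. Since $\HR_\ast(\mc{C})$ has products and equalizers, cofinality of the representable subcategory gives the pro-isomorphism of index diagrams immediately. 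You mention this as a parenthetical alternative; it is in fact the paper's entire argument.

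Your main line, by contrast, has a real gap at exactly the step you flag as ``the hard part.'' The third bijection requires a \emph{pointed} generalized Verdier hypercovering theorem, identifying $\colim_{\HR_\ast}\pi(U, B\Gamma^\ast H)_x$ with $[\ast, B\Gamma^\ast H]_x$. The reference \cite{Jardine13} (as used in Proposition \ref{etfund}) gives only the unpointed statement, and the paper is explicit that this is why one lands in $\pi\Grp$ rather than $\Grp$; the entire pointed-torsor apparatus of \S\ref{fusection} was built precisely to circumvent this. Your justification---that connectedness of $\ccomp U$ ``keeps the basepoint data coherent along the cofiltered indexing category''---is not a proof: connectedness lets you replace the fundamental groupoid by the fundamental group at a fixed vertex, but it does not by itself control how basepoints interact with the simplicial homotopies used to form $\pi(-,-)$ in the Verdier colimit. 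There is also a smaller mismatch: Theorem \ref{charthm} identifies $G$ with the pro-group built from pointed representable \emph{\v{C}ech} hypercovers, not arbitrary representable hypercovers, so invoking it for the right-hand side of the present Proposition already presupposes a comparison you have not made. The cofinality argument handles both issues at once and is what you should supply.
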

\begin{proof}
The essential fact here is that any hypercover $U \to \ast$ of the terminal sheaf $\ast$ may be dominated by a
representable hypercover $V \to \ast$ (cf. Lemma $2.2$, \cite{Jardine8}). A careful reading of the argument there shows
that the representable simplicial scheme $X$ at the base may be replaced by the terminal sheaf $\ast$ in the present
context without losing representability of the split hypercover, even when the terminal sheaf $\ast$ itself is not
representable (e.g. on certain fibred sites), as it is a constant simplicial object.

The argument of \cite{Jardine8} is inductive and begins with the observation that any sheaf epi $U \epi \ast$ to the
terminal sheaf may be dominated by a representable sheaf epi: every presheaf (in particular every sheaf $U$) is a
colimit of representable presheaves $U_i$ (which are sheaves since the topology is subcanonical) on some small index
category $I$. This colimit appears as the coequalizer
\begin{displaymath}
\bigsqcup_{k: i \to j \in I}{U_i} \rars \bigsqcup_{j \in I}{U_j} \epi U
\end{displaymath}
so in particular the indicated map is always an epi and thus it determines a commutative triangle
\begin{displaymath}
\xymatrix{     &   U \ar@{->>}[d] \\
           \sqcup_{j \in I}{U_j} \ar@{->>}[ur] \ar@{->>}[r]  &  \ast }
\end{displaymath}
of epis whenever $U \to \ast$ is a sheaf epi. In particular the map
\begin{displaymath}
V \deq \sqcup_{j \in I}{U_j} \epi U
\end{displaymath}
is an epi in this case so that $x^\ast(V) \to x^\ast(U)$ is also an epi by exactness, thus any point $u_x \in
x^\ast(U)$ lifts to a point $v_x \in x^\ast(V)$ and therefore any \emph{pointed} sheaf epi may be dominated by a
\emph{pointed} representable sheaf epi.

Thus any pointed hypercover is dominated by some pointed \emph{representable} hypercover. Now the category
$\HR_\ast(\mc{C})$ of pointed hypercovers of the terminal sheaf of $\mc{C}$ and pointed simplicial homotopy classes of
maps between them has products and equalizers so the result follows by a cofinality argument.
\end{proof}

One may also wonder what happens if $\pi_1^{\ets}(\SLC(\mc{C}), x)$ is computed instead of $\pi_1^{\ets}(\mc{C}, x)$:

\begin{prop} \label{slcworks}
Suppose $(\mc{C}, x)$ is a pointed connected small Grothendieck site having all finite limits and a subcanonical
topology. Then the pro-group $G$ associated to $\SLC(\mc{C}, x)$ is pro-isomorphic to the pro-group
$\pi_1^{\ets}(\SLC(\mc{C}), x)$ defined by means of pointed representable hypercovers in $\SLC(\mc{C})$.
\end{prop}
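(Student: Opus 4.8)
The plan is to recognize this proposition as an instance of Theorem \ref{charthm} applied to the topos $\SLC(\mc{C})$ in place of $\mc{C}$, the only genuinely new content being that the Galois data attached to $\SLC(\mc{C})$ reproduces the original pro-group $G$. To begin, I would present $\SLC(\mc{C})$ as a site. By Moerdijk's theory recalled in \S\ref{fusection}, there are pointed equivalences $\SLC(\mc{C}) \simeq \Shv{\mathbf{G/U}} \simeq \mc{B}G$ under which the restricted point $x^\ast: \SLC(\mc{C}) \to \Set$ corresponds to the forgetful functor $u$; thus $\SLC(\mc{C})$ is presented by the connected, subcanonical small site $\mathbf{G/U}$ (connected because the terminal one-point $G$-set is indecomposable, which holds since $\mc{C}$ is connected). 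This topos has finite limits and all small coproducts, so after choosing a generating small subcategory closed under the relevant constructions one is in the situation of Theorem \ref{charthm}.

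The hypothesis requiring verification is that pointed $H$-torsors for constant groups $H$ on $\SLC(\mc{C})$ admit pointed trivializations by representable sheaf epis. Here I would argue as in Lemma \ref{torstriv}, which applies verbatim to the subcanonical site $\mathbf{G/U}$ with its small coproducts: any pointed $H$-torsor is locally constant, hence already an object of $\mc{B}G$, and the analysis of \S\ref{fusection} exhibits a pointed trivialization over one of the $G$-equivariant epis $G_i \epi \ast$. Since each $G_i \cong G/U$ is representable in $\mathbf{G/U}$, this is the desired representable pointed trivialization.

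Next I would record the idempotency of the SLC construction: in $\mc{B}G$ every object is a coproduct of orbits $G/U$, each of which is locally constant, so every object of $\SLC(\mc{C})$ is already a sum of locally constant objects. Consequently $\SLC(\SLC(\mc{C})) \simeq \SLC(\mc{C})$ as pointed toposes, and the pro-groupe fondamental \'elargi attached by Galois theory to the SLC subtopos of the site $\SLC(\mc{C})$ is once again $G$.

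With these points in place, Theorem \ref{charthm} applied to $(\SLC(\mc{C}), x)$ gives a pro-isomorphism between $\pi_1^{\ets}(\SLC(\mc{C}), x)$ and the pro-group of $\SLC(\SLC(\mc{C}))$, which by the previous paragraph is $G$; this is the asserted pro-isomorphism. The main obstacle is not homotopy-theoretic --- the hard work already lives in Theorem \ref{charthm} --- but rather the careful bookkeeping of the first two paragraphs: confirming that $\SLC(\mc{C})$ can be presented as a small Grothendieck site meeting all the running hypotheses (finite limits, small coproducts, subcanonical topology) and, in particular, that the representable objects $G/U$ of $\mathbf{G/U}$ genuinely suffice to pointed-trivialize every constant-group torsor.
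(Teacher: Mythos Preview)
Your approach is correct in outline but takes a genuinely different route from the paper's. The paper does \emph{not} invoke Theorem \ref{charthm}; instead it computes $\Hom_{\mathrm{cts}}(\pi_1^{\ets}(\SLC(\mc{C}), x), H)$ directly by unwinding the colimit over pointed representable sheaf epis $(U,u)\epi\ast$ in $\SLC(\mc{C})$, replacing that indexing system by the cofinal family of pointed Galois objects $(Y,y)$ (via Dubuc's Galois theory), and then using the elementary identification $\ccomp\check{C}(Y,y)\cong BG_Y$ to land on $\colim_{G_Y}\Hom(G_Y,H)=\Hom_{\loc}(G,H)$. This is a short, self-contained chain of bijections that never leaves the level of \v{C}ech data and avoids any appeal to pointed-torsor machinery.

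Your argument, by contrast, treats Proposition \ref{slcworks} as a corollary of Theorem \ref{charthm} applied to a site presenting $\SLC(\mc{C})$, together with the idempotency $\SLC(\SLC(\mc{C}))\simeq\SLC(\mc{C})$. That is conceptually pleasant and logically sound (Theorem \ref{charthm} precedes this proposition, so there is no circularity), and it highlights that nothing special is happening beyond what the general theorem already gives. The cost is the bookkeeping you flag yourself: the site $\mathbf{G/U}$ as described does not literally have arbitrary small coproducts, so to match the stated hypotheses of Theorem \ref{charthm} you must either enlarge to a small full subcategory of $\mc{B}G$ closed under coproducts, or observe that the proof of Theorem \ref{charthm} only actually consumes the pointed-trivialization hypothesis (which you verify independently via the representable epis $G_i\epi\ast$). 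Either fix works, but the paper's direct computation sidesteps this entirely by using the cofinality of Galois objects in place of the general trivialization lemma.
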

\begin{proof}
One directly calculates
\begin{eqnarray*}
\Hom_{\mathrm{cts}}(\pi_1^{\ets}(\SLC(\mc{C}), x), H)
& \cong & \colim_{(U, u) \epi \ast}{\Hom(\pi_1(\ccomp \check{C}(U, u)), H)} \\
& \cong & \colim_{(U, u) \epi \ast}{\Hom(\check{C}(U, u), B\Gamma^\ast H)} \\
& \cong & \colim_{(Y, y) \in \Gals(\SLC(\mc{C}))}{\Hom(\check{C}(Y, y), B\Gamma^\ast H)} \\
& \cong & \colim_{(G_Y,\, e_{G_Y})}{\Hom(\check{C}(G_Y), B\Gamma^\ast H)} \\
& \cong & \colim_{G_Y}{\Hom(G_Y, H)} \deq \Hom_{\loc}(G, H)
\end{eqnarray*}
where the $(U, u) \epi \ast$ are pointed representable sheaf epis over $x$, the $(Y, y) \in \SLC(\mc{C})$ are pointed
Galois objects for $(\SLC(\mc{C}), x)$ which are cofinal among such sheaf epis by Galois theory (cf. Prop. $3.1.1$,
Thm. $3.3.8$, \cite{Dubuc1}), the $G_Y \deq \Gals(Y)$ are the associated Galois groups regarded as discrete localic
groups, and $\check{C}(U, u)$ is the \v{C}ech construction for any pointed sheaf epi $(U, u) \epi \ast$. As these
bijections are natural in $H$ the result follows.
\end{proof}
Proposition \ref{slcworks} was previously established by Artin-Mazur (\S$9$, \cite{AM}) who used a slightly different
method.

\subsection{Application to finite \'etale sites}

As a consequence of (V, \cite{SGA1}) and ($4.2$, \cite{Noo1}) one has pointed equivalences
\begin{displaymath}
\Shv{\finet{X}, x} \simeq \Shv{\pi_1^{\Gals}(X, x)\textrm{-}\Set_{df}} \simeq \mc{B}\pi_1^{\Gals}(X, x)
\end{displaymath}
whenever $(\finet{X}, x)$ is the \emph{finite} \'etale site of a connected locally noetherian scheme or DM stack with
geometric point $x$, where $\pi_1^{\Gals}(X, x)$ denotes the profinite fundamental group of $(\finet{X}, x)$ determined
by Grothendieck's general theory of Galois categories. The same techniques as in the proof of Proposition
\ref{slcworks} establish that there is a pro-isomorphism
\begin{displaymath}
\pi_1^{\ets}(\finet{X}, x) \cong \pi_1^{\Gals}(X, x)
\end{displaymath}
in this situation, at least where $\pi_1^{\ets}(\finet{X}, x)$ is defined by pointed \emph{representable} hypercovers,
so that $\pi_1^{\Gals}(X, x)$ pro-represents the functor
\begin{displaymath}
H^1_x(\finet{X}, -): \FinGrp \to \Set
\end{displaymath}
where $\FinGrp$ is the category of finite groups and group homomorphisms between them. Any finite group torsor on
$\et{X}$ automatically belongs to $\finet{X}$, so $H^1_x(\et{X}, -)$ and $H^1_x(\finet{X}, -)$ are isomorphic as
functors on $\FinGrp$. The following Proposition establishes a relationship between the finite and nonfinite \'etale
sites:

\begin{prop} \label{grothcmp}
Suppose $(\et{X}, x)$ is the pointed (nonfinite) \'etale site of a connected locally noetherian scheme or DM stack.
Then the profinite Grothendieck fundamental group $\pi_1^{\Gals}(X, x)$ associated to $\finet{X}$ is pro-isomorphic
to $\pi_1^{\ets}(\et{X}, x)\,\widehat{\,}\,$, the profinite completion of the classical \'etale homotopy pro-group
$\pi_1^{\ets}(\et{X}, x)$ constructed by means of pointed representable hypercovers.
\end{prop}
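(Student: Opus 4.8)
The plan is to show that the two pro-finite groups $\pi_1^{\ets}(\et{X}, x)\,\widehat{\,}\,$ and $\pi_1^{\Gals}(X, x)$ pro-represent the \emph{same} functor $\FinGrp \to \Set$, and then to conclude by the uniqueness of pro-representing objects up to pro-isomorphism ($8.2.4.8$, I, \cite{SGA4}). Everything reduces to assembling identifications that are already available and checking that they are natural in the (finite) coefficient group.

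\textbf{Assembling the ingredients.} First I would note that the hypotheses of Theorem \ref{charthm} are satisfied for the nonfinite \'etale site of a connected locally noetherian scheme or DM stack: the topology is subcanonical with arbitrary small coproducts and finite limits, and pointed $H$-torsors admit pointed trivializations by representable sheaf epis (the case of these sites was recorded just after Lemma \ref{torstriv}). Hence Theorem \ref{charthm} provides natural bijections $\Hom_{\mathrm{cts}}(\pi_1^{\ets}(\et{X}, x), H) \cong H^1_x(\et{X}, H)$ for all discrete groups $H$, by \emph{actual} continuous homomorphisms. Second, from the discussion preceding the statement, $\pi_1^{\Gals}(X, x)$ pro-represents $H^1_x(\finet{X}, -)\colon \FinGrp \to \Set$. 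Third, and crucially, every finite group torsor on $\et{X}$ already lies in $\finet{X}$, so the restriction of $H^1_x(\et{X}, -)$ to $\FinGrp$ agrees with $H^1_x(\finet{X}, -)$ as a functor.

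\textbf{Profinite completion and the chain of identifications.} Next I would invoke the defining universal property of profinite completion: for any pro-group $G$ there are bijections $\Hom_{\mathrm{cts}}(\widehat{G}, H) \cong \Hom_{\mathrm{cts}}(G, H)$, natural in finite groups $H$, where $\widehat{G}$ is formed from the cofiltered system of finite quotients of the stages of $G$. Applying this to $G = \pi_1^{\ets}(\et{X}, x)$ and chaining the three identifications above yields, for every finite group $H$ and naturally in $H$,
\begin{displaymath}
\Hom_{\mathrm{cts}}(\pi_1^{\ets}(\et{X}, x)\,\widehat{\,}\,, H) \cong H^1_x(\et{X}, H) \cong H^1_x(\finet{X}, H) \cong \Hom_{\mathrm{cts}}(\pi_1^{\Gals}(X, x), H).
\end{displaymath}
Thus both pro-finite groups pro-represent the same functor on $\FinGrp$, and uniqueness of pro-representing objects in $\pro{\FinGrp}$ delivers the asserted pro-isomorphism.

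\textbf{Main obstacle.} The difficulty lies not in the bijection-chasing but in making precise that the profinite completion of a pro-group genuinely pro-represents the restriction to $\FinGrp$ of the functor it represents: one must verify that the universal property of completion is compatible with the cofiltered colimits defining $\Hom_{\mathrm{cts}}$, so that one lands in $\pro{\FinGrp}$ rather than merely matching abstract pointed sets, and that the resulting identifications are natural in the finite coefficient group at each stage. Reassuringly, the earlier ``homomorphisms up to conjugacy versus actual homomorphisms'' obstruction does not recur here, since Theorem \ref{charthm} already supplies representability by honest continuous homomorphisms.
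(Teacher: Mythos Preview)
Your proposal is correct and follows essentially the same route as the paper: both show that $\pi_1^{\ets}(\et{X}, x)\,\widehat{\,}\,$ and $\pi_1^{\Gals}(X, x)$ pro-represent the same functor $H^1_x(\et{X}, -)\colon \FinGrp \to \Set$ (using Theorem \ref{charthm} for the former and the discussion preceding the statement for the latter) and then invoke uniqueness of pro-representing objects. Your write-up is more explicit about verifying the hypotheses of Theorem \ref{charthm} and about the universal property of profinite completion, but the argument is the same.
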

\begin{proof}
As seen above $\pi_1^{\Gals}(X, x)$ pro-represents the functor $H^1_x(\et{X}, -): \FinGrp \to \Set$, and on the other
hand $\pi_1^{\ets}(\et{X}, x)$ is known to pro-represent $H^1_x(\et{X}, -): \Grp \to \Set$ by Theorem \ref{charthm}.
Therefore $\pi_1^{\ets}(\et{X}, x)\,\widehat{\,}$ also pro-represents $H^1_x(\et{X}, -): \FinGrp \to \Set$, so it must
be pro-isomorphic to $\pi_1^{\Gals}(X, x)$.
\end{proof}

\section{Pullbacks of groupoids and $\pi_0$}

The results below will require some basic facts about the path components functor $\pi_0$ as it pertains to pullbacks
of groupoids. The following Lemma exemplifies the utility of representing a homotopy fibre sequence by a diagram that
commutes on the nose:
\begin{lem} \label{gpdlem}
Suppose $f: G \to H$ is a map of groupoids inducing surjections $\pi_1(G, x) \epi \pi_1(H, f(x))$ on all fundamental
groups, $y$ an object of $H$, and $F_y$ the homotopy fibre of $f$ over $y$. Then the induced map
\begin{displaymath}
\pi_0(f): \pi_0(F_y) \to \pi_0(G)
\end{displaymath}
is an injection.
\end{lem}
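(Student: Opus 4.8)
The plan is to work with an explicit, strictly commutative model of the homotopy fibre and then let the surjectivity hypothesis do all the work through a single correction step. First I would fix the standard groupoid model of $F_y$: its objects are pairs $(g, \alpha)$ consisting of an object $g$ of $G$ together with a morphism $\alpha \colon f(g) \to y$ in $H$, and a morphism $(g, \alpha) \to (g', \alpha')$ is a morphism $\phi \colon g \to g'$ of $G$ satisfying $\alpha' \circ f(\phi) = \alpha$. Since $H$ is a groupoid each such $\alpha$ is automatically invertible, and the functor inducing $\pi_0(f)$ is the projection $(g, \alpha) \mapsto g$, $\phi \mapsto \phi$, which simply forgets the morphism witnessing that the fibre lies over $y$. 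This is the ``diagram that commutes on the nose'' alluded to above, and it is what makes the defect computed below an honest automorphism.

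Next I would unwind what injectivity of $\pi_0(f)$ asserts in these terms. Recall that in a groupoid two objects lie in the same path component precisely when there is a morphism between them. So suppose $(g, \alpha)$ and $(g', \alpha')$ are objects of $F_y$ whose images lie in the same path component of $G$, and choose a morphism $\psi \colon g \to g'$ in $G$. The goal is to exhibit a morphism of $F_y$ between $(g, \alpha)$ and $(g', \alpha')$. The naive candidate $\psi$ need not be such a morphism, since $\alpha' \circ f(\psi)$ need not equal $\alpha$; the defect is measured by the automorphism $\omega \deq f(\psi)^{-1} \circ \alpha'^{-1} \circ \alpha$ of $f(g)$, that is, by an element of $\pi_1(H, f(g))$.

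The heart of the argument, and the only place the hypothesis is used, is the correction. Since $\pi_1(G, g) \epi \pi_1(H, f(g))$ is surjective by assumption, I would lift $\omega$ to an automorphism $\theta \colon g \to g$ of $G$ with $f(\theta) = \omega$, and set $\phi \deq \psi \circ \theta$. A direct check then gives $\alpha' \circ f(\phi) = \alpha' \circ f(\psi) \circ \omega = \alpha$, so $\phi$ is a genuine morphism $(g, \alpha) \to (g', \alpha')$ in $F_y$; hence the two objects lie in the same path component of $F_y$, which is exactly injectivity of $\pi_0(f)$. I expect the only real subtlety to be the bookkeeping in the first paragraph: one must pin down a model of $F_y$ in which the comparison square commutes strictly, so that ``the defect is an automorphism of $f(g)$'' is literally true rather than true up to coherent homotopy. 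Once that model is in place the surjectivity hypothesis finishes the proof immediately, and no connectedness or finiteness of $G$ or $H$ is required.
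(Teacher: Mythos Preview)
Your proposal is correct and takes essentially the same approach as the paper: both use the explicit model of $F_y$ with objects $(g,\alpha\colon f(g)\to y)$, pick a morphism in $G$ between the underlying objects, measure the defect as an element of a $\pi_1$, lift it via the surjectivity hypothesis, and correct. The paper spends a bit more space setting up the path-object replacement $G\times_H H^I$ and phrases the lift as an automorphism of $y$ rather than of $f(g)$, but after conjugating by $\alpha$ these are the same argument.
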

\begin{proof}
Form the pullback square
\begin{displaymath}
\xymatrix{ G \times_H H^I \ar[r]^(0.6){f_\ast} \ar[d]^{d_{1\ast}}  &  H^I \ar[d]^{d_1} \\
           G \ar[r]_f \ar@/^2ex/[u]^s  &  H }
\end{displaymath}
and note the section $s$ (thus weak equivalence) of $d_{1\ast}$ defined by sending any object $x$ of $G$ to $1: f(x) \to
f(x)$ in $G \times_H H^I$. Generally, an object of $G \times_H H^I$ is an object $x$ of $G$ together with a morphism
$\omega: f(x) \to z$ of $H$, and a morphism is a pair of morphisms $(m, n)$ making the square
\begin{displaymath}
\xymatrix{ f(x) \ar[r]^{\omega} \ar[d]_{f(m)}  &  z \ar[d]^n \\
           f(x^\prime) \ar[r]^{\omega^\prime}  &  z^\prime }
\end{displaymath}
commute in $H$. Then $\pi = d_0f_\ast: G \times_H H^I \to H$ is a fibration (functorially) replacing $f$ and the pullback
\begin{displaymath}
\xymatrix{ F_y \ar[r] \ar[d]  &  G \times_H H^I \ar[d]^\pi \\
           \ast \ar[r]^y        &  H }
\end{displaymath}
defines a model $F_y$ for the homotopy fibre over $y$. Since $\pi s = f$, the objects of $F_y$ look like $\omega: f(x) \to
y$ and the morphisms are triangles
\begin{displaymath}
\xymatrix{ f(x) \ar[rr]^{f(m)} \ar[dr]_{\omega}  &   &  f(x^\prime) \ar[dl]^{\omega^\prime} \\
            &  y  &  }
\end{displaymath}
that commute on the nose. Suppose $\omega: f(x) \to y$, $\omega^\prime: f(x^\prime) \to y$ are two objects of $F_y$ in the
same path component of $G \times_H H^I$. Then there is a commutative square
\begin{displaymath}
\xymatrix{ f(x) \ar[r]^{\omega} \ar[d]_{f(m)}  &  y \ar[d]^n \\
           f(x^\prime) \ar[r]^{\omega^\prime}  &  y  }
\end{displaymath}
in $H$, but the map $\pi: G \times_H H^I \to H$ is surjective on $\pi_1$ so $n$ has a lift
\begin{displaymath}
\xymatrix{ f(x) \ar[r]^{\omega} \ar[d]_{f(p)}  &  y \ar[d]^n \\
           f(x) \ar[r]^{\omega}                &  y }
\end{displaymath}
where $p$ is an automorphism of $x$ in $G$. But then there is a commutative triangle
\begin{displaymath}
\xymatrix{ f(x) \ar[rr]^{f(mp^{-1})} \ar[dr]_{\omega}  &   &  f(x^\prime) \ar[dl]^{\omega^\prime} \\
           &  y  &  }
\end{displaymath}
so $\omega$ and $\omega^\prime$ are in the same path component of $F_y$.
\end{proof}

To apply the above Lemma to pullbacks of groupoids, it is helpful to know the following slightly unusual
characterization of pullback diagrams in sets:
\begin{lem}
A square diagram of sets
\begin{displaymath}
\xymatrix{ A \ar[r]^{g_\ast} \ar[d]_{f_\ast}  &  B \ar[d]^f \\
           C \ar[r]^g         &  D }
\end{displaymath}
is a pullback if and only if for each $c \in C$ the fibre $A_c$ of $f_\ast$ over $c$ is isomorphic to the fibre
$B_{gc}$ of $f$ over $gc \in D$ via the induced map $i_c: A_c \to B_{gc}$.
\end{lem}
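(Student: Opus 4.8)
The plan is to compare the given square against the canonical pullback $P \deq C \times_D B$ and then reduce the entire claim to the elementary fact that a map of sets lying over a fixed base is a bijection precisely when it restricts to a bijection on the fibre over each point of that base.

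First I would form $P = \{(c,b) \in C \times B : g(c) = f(b)\}$ with its two projections $\pi_C \colon P \to C$ and $\pi_B \colon P \to B$, and use the commutativity of the square (that is, $f \circ g_\ast = g \circ f_\ast$) together with the universal property of $P$ to produce the comparison map $\phi \colon A \to P$ given by $\phi(a) = (f_\ast(a), g_\ast(a))$. By the very definition of a pullback, the original square is a pullback if and only if this $\phi$ is a bijection. The crucial structural feature is that $\phi$ lies over $C$, i.e. $\pi_C \circ \phi = f_\ast$, so $\phi$ is a morphism of sets over the base $C$.

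The key observation is then that any map of sets over $C$ is a bijection if and only if it restricts to a bijection on the fibre over each $c \in C$, since both source and target decompose as the disjoint union of their fibres over $C$ and $\phi$ respects this decomposition. So I would compute the fibre of $\phi$ over a fixed $c$. The fibre of $\pi_C$ over $c$ is $\{c\} \times B_{gc}$, canonically identified with $B_{gc} = f^{-1}(g(c))$; the fibre of $f_\ast$ over $c$ is $A_c$; and under these identifications $\phi|_{A_c} \colon A_c \to B_{gc}$ is precisely the induced map $i_c = g_\ast|_{A_c}$ of the statement, since commutativity of the square forces $g_\ast$ to carry $A_c$ into $B_{gc}$. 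Hence $\phi$ is a bijection if and only if every $i_c$ is a bijection, which is exactly the asserted equivalence.

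This argument presents no genuine obstacle beyond bookkeeping; the one point requiring care is the identification of the fibre of the comparison map $\phi$ with the map $i_c$, and in particular the verification that the canonical isomorphism $\pi_C^{-1}(c) \cong B_{gc}$ intertwines $\phi|_{A_c}$ with $g_\ast|_{A_c}$. Once that identification is secured, the fibrewise-bijection criterion closes the proof immediately in both directions.
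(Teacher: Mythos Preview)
Your proposal is correct and follows essentially the same approach as the paper: both arguments compare $A$ with the canonical pullback $C \times_D B$ and analyze the comparison fibrewise over $C$, identifying the fibre of the comparison over $c$ with the map $i_c$. The paper phrases the non-trivial direction by writing down the inverse map $(c,d,b) \mapsto i_c^{-1}(b)$ explicitly and dismisses the converse as trivial, whereas you package both directions at once via the ``bijection over $C$ iff fibrewise bijection'' principle, but the content is the same.
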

\begin{proof}
Assume the induced map $i_c: A_{c^\prime} \to B_{gc^\prime}$ is an isomorphism for each $c^\prime \in C$ and suppose
$(c, d, b) \in C \times_D B$. Then $g(c) = d = f(b)$, so $b \in B_{gc}$ and there is a unique element $a_b \in A_c$
such that $i_c(a_b) = b$. To see that $(c, d, b) \mapsto a_b \deq i_c^{-1}(b)$ is the desired assignment, observe that
$f_\ast(a_b) = c$ since $a_b \in A_c$ and $g_\ast(a_b) = i_c(a_b) = b$. The collection of such assignments determines a
unique map $C \times_D B \to A$, so $A$ is a pullback of the diagram. The converse is trivial.
\end{proof}

\begin{prop} \label{pbprop}
Suppose $f: G \to H$ is a map of groupoids inducing surjections $\pi_1(G, x) \epi \pi_1(H, f(x))$ on all fundamental
groups. Then any homotopy pullback $P$ along a map $g: N \to H$ of groupoids induces a pullback
\begin{displaymath}
\xymatrix{ \pi_0(P) \ar[r] \ar[d]  &  \pi_0(G) \ar[d] \\
           \pi_0(N) \ar[r]         &  \pi_0(H) }
\end{displaymath}
of path component sets.
\end{prop}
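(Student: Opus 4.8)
The plan is to deduce the proposition from the two preceding Lemmas by reducing the pullback assertion to a fibrewise comparison. Write $p: P \to N$ and $q: P \to G$ for the two legs of the homotopy pullback; the induced square of path components commutes, so its being cartesian is exactly the bijectivity of the canonical comparison from $\pi_0(P)$ to $\pi_0(N) \times_{\pi_0(H)} \pi_0(G)$. By the set-theoretic characterization of pullbacks proved just above (a square of sets is cartesian precisely when each fibre of one leg maps isomorphically, via the induced map, onto the corresponding fibre of the opposite leg), it suffices to fix a component $[n] \in \pi_0(N)$ and show that $\pi_0(q)$ restricts to a bijection from the fibre of $\pi_0(p)$ over $[n]$ onto the fibre of $\pi_0(f)$ over $[g(n)]$.

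Both of these fibres should be identified with $\pi_0$ of a single homotopy fibre. Since homotopy fibres of parallel maps agree across a homotopy pullback square, the homotopy fibre of $p$ over $n$ is canonically equivalent to the homotopy fibre $F_{g(n)}$ of $f$ over $g(n)$. For the right-hand leg this is exactly the setting of Lemma \ref{gpdlem}: by construction the objects of $F_{g(n)}$ are pairs $(z, \omega: f(z) \to g(n))$ and the map to $G$ records $z$, so the image of $\pi_0(F_{g(n)}) \to \pi_0(G)$ is visibly the fibre of $\pi_0(f)$ over $[g(n)]$, while Lemma \ref{gpdlem} (applicable because $f$ is surjective on all fundamental groups) shows this map is injective; hence $\pi_0(F_{g(n)})$ is identified with the fibre of $\pi_0(f)$ over $[g(n)]$. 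To run the same argument for the left-hand leg I must first check that $p$ is itself surjective on all fundamental groups. Using the standard model of $P$ with objects $(n', \gamma, z)$, $\gamma: g(n') \to f(z)$, a loop $a \in \pi_1(N, n')$ at $(n', \gamma, z)$ lifts to the automorphism $(a, b)$ once one chooses $b \in \pi_1(G, z)$ with $f(b) = \gamma\, g(a)\, \gamma^{-1}$, which exists precisely because $f$ is $\pi_1$-surjective. Thus Lemma \ref{gpdlem} applies to $p$ as well, and the identical reasoning identifies $\pi_0(F_{g(n)})$ with the fibre of $\pi_0(p)$ over $[n]$.

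It then remains to verify that these two independently obtained identifications are compatible with $\pi_0(q)$, i.e. that the induced map $i_{[n]}$ is the resulting self-map of $\pi_0(F_{g(n)})$ --- which it is, since $q$ sends the homotopy fibre of $p$ over $n$ into $G$ by recording the $G$-coordinate $z$, exactly as the homotopy-fibre inclusion $F_{g(n)} \hra G$ does. The edge case where $[g(n)]$ lies outside the image of $\pi_0(f)$ is automatic, since there both fibres and the common homotopy fibre are empty. Applying the set-theoretic pullback criterion then finishes the proof. I expect the main obstacle to be the middle paragraph: establishing that $p$ inherits $\pi_1$-surjectivity from $f$, so that Lemma \ref{gpdlem} governs the left leg too, and then confirming that the two descriptions of the fibres as $\pi_0(F_{g(n)})$ are genuinely matched by the horizontal map $\pi_0(q)$ rather than differing by some automorphism.
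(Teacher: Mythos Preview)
Your argument is correct and follows the same overall strategy as the paper: reduce the pullback statement to the fibrewise criterion of the preceding set-theoretic Lemma, and identify both fibres with $\pi_0$ of a common homotopy fibre via Lemma~\ref{gpdlem}. The one point of divergence is precisely the step you flag as the main obstacle. You establish injectivity of $\pi_0(F_{g(n)}) \to \pi_0(P)$ by first proving that $p: P \to N$ inherits $\pi_1$-surjectivity from $f$ and then invoking Lemma~\ref{gpdlem} a second time. The paper avoids this verification entirely: after replacing $f$ by a fibration, the composite pullback diagram gives a commutative square
\[
\xymatrix{ \pi_0(F_y) \ar[r]^{y_\ast} \ar[d]_{\cong} & \pi_0(P) \ar[d]^{g_\ast} \\
           \pi_0(F_{gy}) \ar[r]_{(gy)_\ast} & \pi_0(G) }
\]
and since $(gy)_\ast$ is monic by the single application of Lemma~\ref{gpdlem} to $f$, the composite $g_\ast \circ y_\ast$ is monic, hence $y_\ast$ is monic. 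This is shorter and simultaneously handles your compatibility concern, since the square above \emph{is} the statement that the two identifications are matched by $\pi_0(q) = g_\ast$. Your route works, but the paper's diagram chase makes the $\pi_1$-surjectivity of $p$ unnecessary.
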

\begin{proof}
Replace $f$ by a fibration. For any object $y$ of $N$, consider the diagram
\begin{displaymath}
\xymatrix{ F_y \ar[r]^{y_\ast} \ar[d]    &  P \ar[r]^{g_\ast} \ar[d]^{f_\ast}  &  G \ar[d]^f \\
           \ast \ar[r]_y        &  N \ar[r]_g       &  H }
\end{displaymath}
of pullbacks and the induced comparison
\begin{displaymath}
\xymatrix{ \pi_0(F_y) \ar[r]^{y_\ast} \ar[d]_{\cong}  &  \pi_0(P) \ar[r]^{f_\ast} \ar[d]^{g_\ast}  &  \pi_0(N) \ar[d]^g \\
           \pi_0(F_{gy}) \ar[r]_{(gy)_\ast}    &  \pi_0(G) \ar[r]_f         &  \pi_0(H) }
\end{displaymath}
of long exact sequences of fibrations in degree $0$. By Lemma \ref{gpdlem} the map $(gy)_\ast$ is monic, so $y_\ast$ is
monic as well. Thus $\pi_0(F_y)$ (resp. $\pi_0(F_{gy})$) is the set-theoretic fibre of the map $\pi_0(f_\ast)$ (resp.
$\pi_0(f)$) for any such choice of $y$. Apply the previous Lemma.
\end{proof}

\section{Short exact sequences associated to torsor trivializations}

The purpose of this section is to illustrate how Grothendieck's short exact sequences
\begin{displaymath}
1 \to \pi_1^{\Gals}(X_Y) \to \pi_1^{\Gals}(X) \to G_Y \to 1
\end{displaymath}
associated to Galois objects $Y \to X$ of $(\et{X}, x)$ arise (at least in part) from analogous sequences in (pointed)
nonabelian $H^1$, and to explain exactly how using pointed nonabelian $H^1_x$ gives different results than unpointed
nonabelian $H^1$ in this context. Grothendieck's proof of these sequences made use of his theory of base change for
fundamental functors in Galois categories $(\mc{C}, F)$ (\S$6$, V, \cite{SGA1}); the methods presented here give a
different interpretation directly in terms of torsors which may be of independent interest.

Suppose $\mc{C}$ is a small Grothendieck site, $H$ any sheaf of groups on $\mc{C}$, and $U \epi \ast$ any representable
sheaf epi of $\mc{C}$. Consider the sequence
\begin{displaymath}
\Htor(\mc{C})_U \hra \Htor(\mc{C}) \xrightarrow{?_{|U}} \Htor(\mc{C}/U)
\end{displaymath}
of groupoids where $\Htor(\mc{C})_U$ is the full subgroupoid of ordinary $H$-torsors on $\mc{C}$ trivializing upon
restriction $?_{|U}$ to $U$. 

\begin{prop} \label{exfull}
For any sheaf of groups $H$ and representable sheaf epi $U \epi \ast$ on a small Grothendieck site $\mc{C}$, there is an
exact sequence
\begin{displaymath}
1 \to \pi_0(\Htor(\mc{C})_U) \hra H^1(\mc{C}, H) \xrightarrow{\pi_0(?_{|U})} H^1(\mc{C}/U, H)
\end{displaymath}
of pointed sets where the indicated map is injective.
\end{prop}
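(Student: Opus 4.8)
The plan is to prove the two assertions packaged into the word ``exact'': that the first map is a monomorphism, and that the image of $\pi_0(\Htor(\mc{C})_U) \to H^1(\mc{C}, H)$ coincides with the preimage of the basepoint under $\pi_0(?_{|U})$. Before anything else I would record that the restriction functor $?_{|U}: \Shv{\mc{C}} \to \Shv{\mc{C}/U}$ is exact (it is the inverse image of a topos morphism), hence preserves the defining local weak equivalence $EH \times_H T \xrightarrow{\simeq} \ast$ of a torsor, so it carries $H$-torsors on $\mc{C}$ to $H_{|U}$-torsors on $\mc{C}/U$ and sends the trivial torsor $H$ to the trivial torsor $H_{|U}$. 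This guarantees that $\pi_0(?_{|U})$ is a well-defined map of \emph{pointed} sets, with the basepoint of $H^1(\mc{C}/U, H)$ given by the class of $H_{|U}$.

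For injectivity I would exploit that $\Htor(\mc{C})_U$ is by construction a \emph{full} subgroupoid of $\Htor(\mc{C})$: for any two of its objects the hom-sets computed in $\Htor(\mc{C})_U$ and in $\Htor(\mc{C})$ agree, so two such torsors are isomorphic inside $\Htor(\mc{C})_U$ exactly when they are isomorphic in $\Htor(\mc{C})$. Consequently the inclusion induces an injection $\pi_0(\Htor(\mc{C})_U) \hra \pi_0(\Htor(\mc{C})) = H^1(\mc{C}, H)$, with no further homotopy-theoretic input required.

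Exactness at the middle term I would then read directly off the definitions. Every object $T$ of $\Htor(\mc{C})_U$ satisfies, by definition, $T_{|U} \cong H_{|U}$, so its image under the composite lands at the basepoint; this gives the inclusion of the image in the kernel. For the reverse inclusion, suppose $[T] \in H^1(\mc{C}, H)$ maps to the basepoint, i.e.\ $T_{|U}$ is isomorphic to the trivial torsor on $\mc{C}/U$. Then $T$ trivializes upon restriction to $U$, so $T$ is (isomorphic to) an object of the full subgroupoid $\Htor(\mc{C})_U$, and hence $[T]$ lies in the image of $\pi_0(\Htor(\mc{C})_U)$. Combining the two inclusions yields exactness.

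The proof is essentially a matter of unwinding definitions once the groupoid-theoretic framework is in place, so I do not anticipate a genuine obstacle; the only point requiring care is the verification that $?_{|U}$ preserves torsors and basepoints via exactness, which is what makes the sequence one of \emph{pointed} sets and lets ``kernel'' mean the preimage of the class of the trivial torsor. One could alternatively package injectivity through the homotopy fibre of $?_{|U}$ over the trivial torsor together with Lemma \ref{gpdlem}, but the full-subgroupoid observation is more direct here.
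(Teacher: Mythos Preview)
Your proposal is correct and follows essentially the same route as the paper: injectivity via the observation that a full subgroupoid inclusion is injective on $\pi_0$, and exactness at the middle term by directly unwinding the definition of $\Htor(\mc{C})_U$. The only differences are cosmetic---you spell out why $?_{|U}$ is a pointed map of torsor groupoids, while the paper makes explicit that the \emph{entire} isomorphism class $[t]$ lands in $\Htor(\mc{C})_U$ once one representative does; neither point is a genuine departure.
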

\begin{proof}
An inclusion of a full subgroupoid is always injective on path components so the indicated map is injective. Suppose
$[t] \in \pi_0(\Htor(\mc{C}))$ and $\pi_0(?_{|U})([t]) = [\ast_{\mc{C}/U}]$ so that $[t] \in \ker(\pi_0(?_{|U}))$. Then
$?_{|U}(t) = t_{|U}$ is in the path component of $\ast_{\mc{C}/U}$, so there is an isomorphism $t_{|U} \cong
\ast_{\mc{C}/U}$ which says exactly that $t$ trivializes upon restriction. Therefore $t$ is an object of
$\Htor(\mc{C})_U$, and so is any other object $x \in [t]$ since by definition there is then an isomorphism $x \cong t$
which induces an isomorphism upon restriction. Conversely, if $t$ trivializes upon restriction then by definition there
will be an $H$-equivariant isomorphism $t_{|U} \cong \ast_{|U}$ so that $[t] \in \ker{\pi_0(?_{|U})}$. Therefore
$\ker{\pi_0(?_{|U})} = \pi_0(\Htor(\mc{C})_U)$, so the sequence is exact as was to be shown.
\end{proof}

In particular $(\mc{C}, x)$ may be the pointed \emph{finite} \'etale site $\finet{X}$ for some connected locally
noetherian scheme or DM stack $X$ and $U = Y \epi \ast$ a Galois object of $\mc{C}$. The following justifies a certain
notational identification:

\begin{lem} \label{galident}
For any connected Galois object $Y \epi \ast$ of a Galois category $(\mc{C}, F)$ that is also a connected site there
are bijections
\begin{displaymath}
\pi_0(\Htor(\mc{C})_Y) \cong H^1(G_Y, H)
\end{displaymath}
of pointed sets, natural in constant sheaves of discrete groups $H \deq \Gamma^\ast H$, where the latter pointed set is
the nonabelian $H^1$ of Serre (cf. \cite{Serre1}) for the trivial action of the group $G_Y$ on $H$. There are also
bijections
\begin{displaymath}
H^1(G_Y, H) \cong [BG_Y, BH]
\end{displaymath}
natural in discrete groups $H$ where the righthand side is pointed by the class of the trivial homomorphism.
\end{lem}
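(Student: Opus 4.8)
The plan is to treat the two bijections separately, the second being essentially classical and feeding into the first. For the identification $H^1(G_Y, H) \cong [BG_Y, BH]$, recall that for the trivial action of $G_Y$ on $H$ a Serre $1$-cocycle is just a homomorphism $G_Y \to H$, and two cocycles are cohomologous precisely when they differ by conjugation by an element of $H$; thus $H^1(G_Y, H) \cong \Hom(G_Y, H)/\mathrm{conj}$. On the other hand $BG_Y$ and $BH$ are Eilenberg--MacLane objects $K(-,1)$, so free (unpointed) homotopy classes of maps biject with $\Hom(G_Y, H)$ modulo conjugation in the target, a change of basepoint in $BH$ conjugating the induced homomorphism. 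Both sides are therefore canonically $\Hom(G_Y, H)/\mathrm{conj}$, and under this identification the trivial homomorphism corresponds to the constant map, which pins down the basepoints. Naturality in $H$ is immediate since every step is functorial in $H$.

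For the first bijection I would exploit the Galois structure of $Y$ to reduce $\pi_0(\Htor(\mc{C})_Y)$ to $[BG_Y, BH]$. The key geometric input is that $Y \epi \ast$ is Galois, so there is a $G_Y$-equivariant isomorphism $Y \times_\ast Y \cong Y \times G_Y$ with $G_Y$ the constant sheaf on the (discrete) Galois group; iterating, the \v{C}ech resolution satisfies $\check{C}(Y)_n \cong Y \times G_Y^{\times n}$, so $\check{C}(Y)$ is the nerve of the translation groupoid for the $G_Y$-action on $Y$. Because $Y$ is a connected object, $\ccomp Y \cong \ast$, and applying the connected-components functor levelwise collapses each $Y \times G_Y^{\times n}$ to $G_Y^{\times n}$ with exactly the face and degeneracy maps of the nerve of $G_Y$; that is, $\ccomp \check{C}(Y) \cong BG_Y$. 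Then, using the $\ccomp \dashv \Gamma^\ast$ adjunction exactly as in the proof of Lemma \ref{connlem} (and the fact that $B\Gamma^\ast H = \Gamma^\ast BH$ is constant), one obtains natural bijections of simplicial homotopy classes $\pi(\check{C}(Y), B\Gamma^\ast H) \cong \pi(\ccomp\check{C}(Y), BH) \cong \pi(BG_Y, BH) = [BG_Y, BH]$, the last equality holding since $BG_Y$ is cofibrant and $BH$ is a Kan complex.

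It remains to connect $\pi_0(\Htor(\mc{C})_Y)$ to these homotopy classes, and this is the step I expect to be the main obstacle. The point is that one must work with the \emph{simplicial} homotopy classes $\pi(\check{C}(Y), B\Gamma^\ast H)$ of maps out of the \emph{fixed} \v{C}ech object, not the localized classes $[\check{C}(Y), B\Gamma^\ast H]$: the latter would replace $\check{C}(Y)$ by $\ast$ and return all of $H^1(\mc{C}, H)$ rather than only the classes trivializing over $Y$. Concretely, a simplicial map $\check{C}(Y) \to B\Gamma^\ast H$ is precisely a descent $1$-cocycle for the cover $Y \epi \ast$, and simplicial homotopies between such maps are precisely coboundaries, so that $\Htor(\mc{C})_Y$ is equivalent to the corresponding cocycle groupoid and $\pi_0(\Htor(\mc{C})_Y) \cong \pi(\check{C}(Y), B\Gamma^\ast H)$. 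This may be set up either through the cocycle-category formalism of \cite{Jardine14} (as in Lemma \ref{cident}, but in its unpointed form, so that one lands in conjugacy classes rather than in $\Hom$) or, equivalently and perhaps more transparently, by the classical observation that effective descent along the Galois cover $Y$ identifies an $H$-torsor trivializing over $Y$ with a section of the constant sheaf $H$ over $Y \times_\ast Y \cong Y \times G_Y$ subject to the cocycle condition, i.e. with a homomorphism $G_Y \to H$, isomorphisms of such being conjugations. Chaining the displays then yields $\pi_0(\Htor(\mc{C})_Y) \cong [BG_Y, BH] \cong H^1(G_Y, H)$, naturally in $H$.
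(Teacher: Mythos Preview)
Your proposal is correct and follows essentially the same route as the paper: identify $\pi_0(\Htor(\mc{C})_Y)$ with $\pi(\check{C}(Y), B\Gamma^\ast H)$ via the cocycle-category formalism, pass across the $\ccomp \dashv \Gamma^\ast$ adjunction using $\ccomp\check{C}(Y) \cong BG_Y$ (from $Y \times Y \cong \sqcup_{G_Y} Y$), and finish with the classical identification $\pi(BG_Y, BH) \cong [BG_Y, BH] \cong H^1(G_Y, H)$. The paper invokes Lemma~4 of \cite{Jardine4} directly for the first step rather than deducing it from an unpointed variant of Lemma~\ref{cident}, but this is the same mechanism; your alternative effective-descent description of that step is also fine and amounts to unwinding what the cocycle category says.
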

\begin{proof}
By Lemma $4$ of \cite{Jardine4} there is a weak equivalence $BH(\ast, BH)_Y \simeq B\Htor(\mc{C})_Y$ where $H(\ast,
BH)_Y$ is the union of all path components of the cocycle category $H(\ast, BH)$ containing cocycles of the form
\begin{displaymath}
\ast \xleftarrow{\simeq} \check{C}(Y) \to BH.
\end{displaymath}
In particular there is a bijection $\pi_0(\Htor(\mc{C})_Y) \cong \pi_0(BH(\ast, BH)_Y)$, and the latter set is bijective
to $\pi(\check{C}(Y), B\Gamma^\ast H)$ as one sees directly from the fact that natural transformations correspond to
simplicial homotopies upon taking nerves (cf. the proof of Lemma $4$ of \cite{Jardine4}). As $H = \Gamma^\ast H$ is a
constant sheaf of groups there are bijections
\begin{displaymath}
\pi(\check{C}(Y), B\Gamma^\ast H) \cong \pi(\ccomp \check{C}(Y), BH) \cong \pi(BG_Y, BH) \cong H^1(G_Y, H),
\end{displaymath}
where $\ccomp \check{C}(Y) \cong BG_Y$ because $Y \times Y \cong \sqcup_{G_Y}{Y}$ since $Y$ is connected Galois. The
latter assertion follows from the bijection $\pi(BG_Y, BH) \cong [BG_Y, BH]$ which exists because $BH$ is fibrant and
all simplicial sets are cofibrant.
\end{proof}

\begin{prop} \label{sestors}
Suppose $(\mc{C}, F)$ is a Galois category that is also a connected Grothendieck site, $Y \epi \ast$ a connected Galois
object of $\mc{C}$ with group $G_Y$, and $H$ a constant sheaf of discrete groups on $\et{X}$. Then there is an exact
sequence
\begin{displaymath}
1 \to H^1(G_Y, H) \hra H^1(X, H) \to H^1(Y, H)
\end{displaymath}
of pointed sets natural in $H$.
\end{prop}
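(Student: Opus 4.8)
The plan is to realize this sequence as the instance $U = Y$ of Proposition \ref{exfull}, after reinterpreting its two outer terms by means of the identifications already in hand.

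First I would check that Proposition \ref{exfull} applies with $U = Y$. Since $(\mc{C}, F)$ is a Galois category that is also a connected Grothendieck site, the connected Galois object $Y$ is representable and its structure map $Y \epi \ast$ is a sheaf epi, hence a representable sheaf epi of $\mc{C}$. Instantiating Proposition \ref{exfull} at this choice produces an exact sequence of pointed sets
$$1 \to \pi_0(\Htor(\mc{C})_Y) \hra H^1(\mc{C}, H) \xrightarrow{\pi_0(?_{|Y})} H^1(\mc{C}/Y, H)$$
in which the left-hand map is injective.

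Next I would translate the three terms into the notation of the statement. Under the usual identification of the connected object $Y$ with its slice site, the middle term $H^1(\mc{C}, H)$ is $H^1(X, H)$, the right-hand term $H^1(\mc{C}/Y, H)$ is $H^1(Y, H)$, and the map between them is restriction. For the kernel term, Lemma \ref{galident} supplies a bijection $\pi_0(\Htor(\mc{C})_Y) \cong H^1(G_Y, H)$, valid precisely because $H = \Gamma^\ast H$ is a constant sheaf of discrete groups and $Y$ is connected Galois with group $G_Y$. Substituting this bijection into the sequence above yields exactly the asserted
$$1 \to H^1(G_Y, H) \hra H^1(X, H) \to H^1(Y, H).$$

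Finally I would verify naturality in $H$. The sequence of Proposition \ref{exfull} is functorial in the sheaf of groups, its terms being path components of torsor groupoids on which a homomorphism $H \to H'$ acts by change of coefficients, and the bijection furnished by Lemma \ref{galident} is explicitly natural in $H$; since a composite of natural transformations is again natural, the resulting sequence is natural in $H$. I do not expect a genuine obstacle here, as the substantive content---exactness at the level of torsor groupoids and the Galois-theoretic identification of the kernel with $H^1(G_Y, H)$---is already established in Proposition \ref{exfull} and Lemma \ref{galident}. The only point requiring any care is the notational bookkeeping $H^1(X, H) = H^1(\mc{C}, H)$, $H^1(Y, H) = H^1(\mc{C}/Y, H)$, together with the verification that the constancy hypothesis on $H$ needed for Lemma \ref{galident} is indeed present.
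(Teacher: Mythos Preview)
Your proposal is correct and follows essentially the same approach as the paper: apply Proposition \ref{exfull} with $U = Y$ and then invoke Lemma \ref{galident} to identify the kernel term $\pi_0(\Htor(\mc{C})_Y)$ with $H^1(G_Y, H)$. The paper's proof is simply a terser version of what you wrote, and your added remarks on notational bookkeeping and naturality are harmless elaborations of the same argument.
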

\begin{proof}
Proposition \ref{exfull} applies. Combine the resulting short exact sequence with the identification
$\pi_0(\Htor(\mc{C})_Y) \cong H^1(G_Y, H)$ of the previous Lemma.
\end{proof}

This may be considered to be the unpointed torsor theoretic analogue of Grothendieck's short exact sequence for Galois
descent. Analogous arguments work for pointed torsors: let $F_U$ denote the homotopy fibre of the restriction map
\begin{displaymath}
\Htor_x \xrightarrow{?_{|U}} \Htor_u
\end{displaymath}
for any \emph{pointed} representable sheaf epi $(U, u) \epi \ast$ of a pointed connected small Grothendieck site
$(\mc{C}, x)$ and \emph{constant} sheaf of discrete groups $H \deq \Gamma^\ast H$ as in subsection \ref{fusection}. Then
there is a bijection $\pi_0(F_U) \cong \Hom(\check{C}(U, u), BH)$ by Lemma \ref{cident} and the long exact sequence in
homotopy groups associated to $?_{|U}$ degenerates to the exact sequence
\begin{displaymath}
1 \to \Hom(\check{C}(U, u), BH) \to \pi_0(\Htor_x) \to \pi_0(\Htor_u)
\end{displaymath}
of pointed sets.

\begin{prop} \label{prevprop}
Suppose $(\mc{C}, x)$ is a pointed connected small Grothendieck site, $(U, u) \epi \ast$ a pointed representable
sheaf epi of $(\mc{C}, x)$, and $H \deq \Gamma^\ast H$ a constant sheaf of discrete groups on $\mc{C}$. Then there
are exact sequences
\begin{displaymath}
1 \to \Hom(\check{C}(U, u), BH) \hra H^1_x(\mc{C}, H) \to H^1_u(\mc{C}/(U, u), H)
\end{displaymath}
of pointed sets natural in discrete groups $H$ where the indicated map is injective if $(U, u)$ is connected.
\end{prop}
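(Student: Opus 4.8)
The plan is to read the statement off the homotopy long exact sequence of the fibre sequence $F_U \to \Htor_x \to \Htor_u$ of groupoids set up just before the Proposition, isolating the injectivity clause as the single place where connectedness of $(U, u)$ is spent. First I would use Lemma \ref{cident} to identify the fibre as $\pi_0(F_U) \cong \Hom(\check{C}(U, u), BH)$. Since $(\mc{C}, x)$ is connected, its fibre functor is faithful on $\SLC(\mc{C})$ by Galois theory, so every pointed $H$-torsor on $\mc{C}$ has trivial pointed automorphism group and $\pi_1(\Htor_x)$ vanishes identically; the long exact sequence thus reduces to $1 \to \pi_1(\Htor_u) \to \pi_0(F_U) \to \pi_0(\Htor_x) \to \pi_0(\Htor_u)$. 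Under the definitions $\pi_0(\Htor_x) = H^1_x(\mc{C}, H)$ and $\pi_0(\Htor_u) = H^1_u(\mc{C}/(U, u), H)$ the three right-hand terms reproduce the displayed sequence, whose exactness at the middle is part of the long exact sequence, while the leftmost $\pi_1(\Htor_u)$ is exactly what the injectivity clause must kill. Naturality in $H$ is inherited from the fibre sequence and from Lemma \ref{cident}.

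It therefore remains to upgrade the leftmost map to an injection under the hypothesis that $(U, u)$ is connected. In the long exact sequence of a fibration of groupoids the fibres of $\pi_0(F_U) \to \pi_0(\Htor_x)$ are the orbits of the monodromy action of $\pi_1(\Htor_u)$ at the basepoint on $\pi_0(F_U)$, so the map is injective precisely when that fundamental group vanishes; equivalently, one may package this through Lemma \ref{gpdlem} applied to $?_{|U}$. Either way the task reduces to showing that the trivial pointed $H$-torsor on $\mc{C}/(U, u)$ has no nontrivial pointed automorphisms. Here I would use that connectedness of $(U, u)$ makes $(\mc{C}/(U, u), u)$ a connected pointed site, so Galois theory applies to it and its fibre functor $u^\ast$ is faithful on $\SLC(\mc{C}/(U, u))$. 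As $H$ is constant the trivial torsor is locally constant, hence lies in $\SLC(\mc{C}/(U, u))$, so any pointed automorphism is the identity after applying $u^\ast$ and is therefore trivial; thus $\pi_1(\Htor_u)$ vanishes at the basepoint and the first map is injective.

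The step I expect to be the crux, and the precise reason the connectedness of $(U, u)$ cannot be dropped, is this vanishing of pointed automorphisms. The underlying mechanism is that the $H$-equivariant automorphisms of the trivial torsor $H \times U$ correspond by adjunction to $\Hom(U, \Gamma^\ast H) \cong \Hom(\ccomp U, H)$: when $U$ is connected this collapses to $H$ and the pointing pins the associated element down to the identity, whereas if $\ccomp U$ has several components the pointing constrains only the factor indexed by the component of the basepoint, leaving genuinely nontrivial pointed automorphisms and breaking injectivity. I would therefore present the Galois-theoretic faithfulness of $u^\ast$ as the clean formulation of the obstruction, using the explicit $\ccomp U$ computation only as the concrete justification behind it.
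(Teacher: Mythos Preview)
Your proposal is correct and follows essentially the same route as the paper: the exact sequence comes from the homotopy long exact sequence of the fibre sequence $F_U \to \Htor_x \to \Htor_u$ together with Lemma \ref{cident}, and injectivity under connectedness of $(U,u)$ is obtained by using Galois theory on the connected site $\mc{C}/(U,u)$ to kill pointed automorphisms and then invoking Lemma \ref{gpdlem}. Your added $\ccomp U$ computation explaining why connectedness is genuinely needed is a nice supplement but not part of the paper's argument.
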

\begin{proof}
By definition $H^1_x(\mc{C}, H) = \pi_0(\Htor_x)$ and $H^1_u(\mc{C}/(U, u), H) = \pi_0(\Htor_u)$. The indicated map is
injective if $(U, u)$ is connected since then the site $\mc{C}/(U, u)$ is connected so that $\Htor_u$ has no
nontrivial automorphisms at any of its basepoints by Galois theory; then apply Lemma \ref{gpdlem}.
\end{proof}

\begin{cor} \label{corgal}
With the hypotheses of Proposition \ref{prevprop}, if $\mc{C}$ is furthermore a Galois category for the fibre functor
$F_x$ then there are exact sequences
\begin{displaymath}
1 \to \Hom(G_Y, H) \hra H^1_x(\mc{C}, H) \to H^1_y(\mc{C}/(Y, y), H)
\end{displaymath}
natural in discrete groups $H$ associated to any connected Galois object $(Y, y) \epi \ast$ of $\mc{C}$.
\end{cor}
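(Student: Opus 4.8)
The plan is to deduce Corollary \ref{corgal} from Proposition \ref{prevprop} by specializing the pointed representable sheaf epi $(U, u) \epi \ast$ to a connected pointed Galois object $(Y, y) \epi \ast$ of the Galois category $\mc{C}$. Since $\mc{C}$ is a Galois category for the fibre functor $F_x$, such connected Galois objects exist and are pointed by the geometric point $x$; I would simply apply Proposition \ref{prevprop} with $(U, u) = (Y, y)$ to obtain the exact sequence
\begin{displaymath}
1 \to \Hom(\check{C}(Y, y), BH) \hra H^1_x(\mc{C}, H) \to H^1_y(\mc{C}/(Y, y), H)
\end{displaymath}
of pointed sets, natural in $H$, with the leftmost map injective because $(Y, y)$ is connected.

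The only substantive work is then to identify the first term $\Hom(\check{C}(Y, y), BH)$ with $\Hom(G_Y, H)$. First I would observe that the pointed \v{C}ech resolution behaves just like its unpointed counterpart after applying $\ccomp$: the computation $\ccomp \check{C}(Y, y) \cong BG_Y$ exactly as in the proof of Lemma \ref{galident}, where the key input is the isomorphism $Y \times Y \cong \sqcup_{G_Y}{Y}$ valid because $Y$ is connected Galois. By the adjunction between $\ccomp$ and $\Gamma^\ast$ together with the fact that $H = \Gamma^\ast H$ is constant, one has
\begin{displaymath}
\Hom(\check{C}(Y, y), B\Gamma^\ast H) \cong \Hom(\ccomp \check{C}(Y, y), BH) \cong \Hom(BG_Y, BH),
\end{displaymath}
and the latter set of (pointed) simplicial maps is naturally in bijection with $\Hom(G_Y, H)$ since $BG_Y$ and $BH$ are nerves of one-object groupoids and pointed simplicial maps between such nerves correspond exactly to group homomorphisms. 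Substituting this identification into the exact sequence above yields the desired sequence.

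I expect the main subtlety to lie not in the formal deduction but in verifying that the pointing is handled consistently throughout: the basepoint $y$ of $Y$ must match the global section on $x^\ast$ used to pose the pointed \v{C}ech resolution, and the identification $\Hom(\check{C}(Y, y), B\Gamma^\ast H) \cong \Hom(G_Y, H)$ must respect basepoints so that the trivial homomorphism corresponds to the trivial torsor. This is where the \emph{pointed} version genuinely differs from the unpointed Lemma \ref{galident}: rather than simplicial homotopy classes one works with actual maps $\check{C}(Y, y) \to BH$, so the colimit-free, on-the-nose description of $\pi_0(F_U)$ from Lemma \ref{cident} is what makes the identification with $\Hom(G_Y, H)$ (not merely $[BG_Y, BH]$) legitimate. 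Once this bookkeeping is in place the corollary follows immediately, so the proof is essentially a one-line invocation of Proposition \ref{prevprop} combined with the \v{C}ech-to-Galois-group computation borrowed from Lemma \ref{galident}.
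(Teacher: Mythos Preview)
Your proposal is correct and follows exactly the paper's approach: apply Proposition \ref{prevprop} with $(U,u)=(Y,y)$ and then identify $\Hom(\check{C}(Y,y),BH)\cong\Hom(G_Y,H)$ via $\ccomp\check{C}(Y,y)\cong BG_Y$. The paper's own proof is literally the one-line observation $\ccomp\check{C}(Y,y)\cong BG_Y$, so your write-up is a faithful (and more detailed) expansion of it.
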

\begin{proof}
By the observation $\ccomp \check{C}(Y, y) \cong BG_Y$.
\end{proof}

As the functors $\Hom(G_Y, -)$, $H^1_x(\mc{C}, -)$, and $H^1_y(\mc{C}/(Y, y), -)$ restricted to $\FinGrp$ are
pro-representable by $G_Y$, $\pi_1^{\Gals}(\mc{C}, x)$, and $\pi_1^{\Gals}(\mc{C}/(Y, y), y)$, respectively, there are
induced exact sequences
\begin{displaymath}
\pi_1^{\Gals}(\mc{C}/(Y, y), y) \to \pi_1^{\Gals}(\mc{C}, x) \epi G_Y \to 1
\end{displaymath}
of profinite groups where the indicated map is an epimorphism. The injectivity of the first map may then be established
by the classical observation that $\pi_1^{\Gals}(\mc{C}/(Y, y), y)$ corresponds to the subgroup of $\Aut(F_x)$ of
automorphisms of $F_x$ fixing $(Y, y)$ ($6.13$, V, \cite{SGA1}).

\section{\'Etale van Kampen theorems}

\'Etale van Kampen theorems have previously been developed with the goal of expressing the pro-groupe fondamental
\'elargi $G$ of any pointed connected small Grothendieck site $(\mc{C}, x)$ in terms of the corresponding pro-groups of
hopefully ``simpler'' spaces. Past approaches include \cite{Stix1}, (\S$5$, IX, \cite{SGA1}) upon which the arguments of
\cite{Stix1} are based, and \cite{Z1}. The former two works are based on rather intricate constructions coming from
descent theory and only treat the case of profinite $\pi_1^{\Gals}$, whereas the latter does address the pro-groupoid
fondamental \'elargi ($4.8$, \cite{Z1}) but only with respect to covers by Zariski open substacks.

The approach described here makes use of homotopy theory to simultaneously hide the descent theoretic aspects of the
former two constructions while generalizing the results of \cite{Z1} to deal with certain non-Zariski coverings. The
resulting van Kampen theorem here is simpler to state than ($5.3$, $5.4$, \cite{Stix1}), does not require the
covering to consist exclusively of monomorphisms, and does not depend on specific properties of the \'etale
topology. On the other hand it is a statement about pro-groups rather than pro-groupoids, so in the latter case one
must still defer to the methods of $\cite{Z1}$ which effectively assume that the covering is given by monomorphisms.

Given any sheaf of groups $H$ on a small Grothendieck site $\mc{C}$, the restrictions $H_{|U}$ of $H$ to the sites
$\mc{C}/U$ for objects $U$ of $\mc{C}$ have associated cocycle categories $H(\ast, BH_{|U})$, and these determine a
simplicial presheaf $B\mathbf{H}(\ast, BH)$ defined in sections $U$ by
\begin{displaymath}
B\mathbf{H}(\ast, BH)(U) \deq BH(\ast, BH_{|U}).
\end{displaymath}
This simplicial presheaf satisfies descent in the sense that it is sectionwise equivalent to a globally fibrant model
(i.e. a fibrant object for the injective closed model structure) and admits a local weak equivalence
\begin{displaymath}
BH \xrightarrow{\simeq} B\mathbf{H}(\ast, BH)
\end{displaymath}
so is a model for the classifying stack of $H$ (Theorem $4$, \cite{Jardine4}).  There is a sectionwise weak equivalence
$B\mathbf{H}(\ast, BH) \simeq B(H_{|?}$-$\Tors)$ where the latter is the simplicial presheaf of nerves of groupoids of
$H_{|U}$-torsors for restrictions of $H$ to the various sites $\mc{C}/U$ (by remarks preceding ibid.), so the latter also
satisfies descent.

Suppose now that $(\mc{C}, x)$ is a pointed small Grothendieck site with a subcanonical topology and finite products and
that $(U, u)$ and $(V, v)$ are pointed objects of $\mc{C}$ such that the pullback square $S$
\begin{displaymath}
\xymatrix{ U \times V \ar[r] \ar[d]  &  U \ar[d] \\
           V \ar[r]  &  \ast }
\end{displaymath}
is a pushout of pointed sheaves on $\mc{C}$, where $U \times V$ is pointed by $u \times v$. By Quilen's axiom SM$7$ (cf.
$11.5$, I, \cite{GJ} for a definition and $3.1$, \cite{Jardine3} for a proof), this pushout determines a homotopy
cartesian square $H_S$
\begin{displaymath}
\xymatrix{ \Htor(\mc{C}) \ar[r] \ar[d] & \Htor(U) \ar[d] \\
           \Htor(V) \ar[r]             & \Htor(U \times V) }
\end{displaymath}
of groupoids of $H$-torsors naturally in $H$ for any \emph{constant} sheaf of groups $H \deq \Gamma^\ast H$ on
$\mc{C}$ whenever $U \to \ast$ or $V \to \ast$ is monic (i.e. a \emph{cofibration} for the injective closed model
structure), or more generally whenever $S$ is homotopy cocartesian for the injective closed model structure.

\begin{lem} \label{hcart}
Suppose a commutative square
\begin{displaymath}
\xymatrix{ X \ar[r] \ar[d]   &  W \ar[d] \\
           Y \ar[r]          &  Z }
\end{displaymath}
is homotopy cartesian over an object $B$ in some right proper closed model category, and suppose the structure maps
from the objects of this square to $B$ are fibrations. Let $c: A \to B$ be a weak equivalence. Then the pullback of
this square along $c$ is also homotopy cartesian.
\end{lem}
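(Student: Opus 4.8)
The plan is to reduce the claim to the standard fact that being homotopy cartesian is invariant under objectwise weak equivalence of squares, and to produce such an equivalence directly from right properness.

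First I would fix notation: write $X_A \deq X \times_B A$, and similarly $W_A$, $Y_A$, $Z_A$, for the four corners pulled back along $c: A \to B$. Since $(-)\times_B A$ is a functor on the over-category of $B$, these assemble into a commutative square --- the pullback of the given square --- together with a map from it to the original square whose four component maps are the projections $\mathrm{pr}_X: X_A \to X$, $\mathrm{pr}_W: W_A \to W$, $\mathrm{pr}_Y: Y_A \to Y$, $\mathrm{pr}_Z: Z_A \to Z$.

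The crucial first step is to observe that each projection is a weak equivalence. Indeed, $\mathrm{pr}_X$ is the base change of the weak equivalence $c$ along the fibration $X \to B$, so right properness of the model category forces $\mathrm{pr}_X$ to be a weak equivalence; the identical argument applies to $W_A \to W$, $Y_A \to Y$, and $Z_A \to Z$, using precisely the hypothesis that all four structure maps to $B$ are fibrations. Thus the comparison from the pulled-back square to the original square is an objectwise weak equivalence.

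It then remains to invoke the homotopy invariance of homotopy pullbacks. The projections restrict to a map of cospans $(Y_A \to Z_A \leftarrow W_A) \to (Y \to Z \leftarrow W)$ that is objectwise a weak equivalence, so the induced map on homotopy pullbacks $Y_A \rigtimes_{Z_A} W_A \to Y \rigtimes_Z W$ is a weak equivalence. Naturality of the canonical comparison maps then gives a commutative square
\[
\xymatrix{ X_A \ar[r] \ar[d]_{\mathrm{pr}_X} & Y_A \rigtimes_{Z_A} W_A \ar[d] \\ X \ar[r] & Y \rigtimes_Z W }
\]
in which the left vertical is a weak equivalence by the previous step, the right vertical is a weak equivalence by homotopy invariance, and the bottom horizontal is a weak equivalence because the original square is homotopy cartesian. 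Two-out-of-three then forces the top horizontal $X_A \to Y_A \rigtimes_{Z_A} W_A$ to be a weak equivalence, which is exactly the assertion that the pulled-back square is homotopy cartesian.

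The only genuine content, and the one place where right properness is indispensable, is the second step above: without it one could not conclude that the strict pullback $X \times_B A$ represents the homotopy pullback of $X \to B \leftarrow A$, and hence that $\mathrm{pr}_X$ is a weak equivalence. Everything afterward is formal, using only that the homotopy pullback is a homotopy functor of its cospan. One could instead factor $W \to Z$ as a trivial cofibration followed by a fibration and test cartesianness on strict pullbacks, but tracking that factorization through the base change along $c$ is more cumbersome than the objectwise-equivalence argument, so I would favor the route above.
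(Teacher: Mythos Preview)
Your argument is correct. Both proofs rest on the same right-properness observation --- that pulling back the weak equivalence $c$ along any fibration into $B$ yields a weak equivalence --- but you package it differently from the paper. The paper does exactly what you mention and dismiss in your final paragraph: it factors $W \to Z$ as a trivial cofibration $q$ followed by a fibration $g$, checks that the base-changed factorization $g_c q_c$ is still a weak equivalence followed by a fibration (using right properness on $q$, via the fibrations $W \to B$ and $Q \to Z \to B$), and then applies right properness once more to the comparison map $X \to Q \times_Z Y$ (via the fibrations $X \to B$ and $Q \times_Z Y \to Y \to B$) to conclude that its base change is a weak equivalence. Your route instead applies right properness uniformly to all four corners and then invokes the standard homotopy invariance of homotopy pullbacks as a black box; this is cleaner and more conceptual, at the cost of importing that lemma rather than unwinding it in place. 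The paper's version is more self-contained but, as you anticipated, involves more bookkeeping with the factorization.
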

\begin{proof}
Factor the map $W \to Z$ as a trivial cofibration $q: W \to Q$ followed by a fibration $g: Q \to Z$, and let $q_c$
(resp. $g_c$) denote the base change of $q$ (resp. $g$) along $c$. Then the composite $Q \to Z \to B$ is a fibration
since $Z \to B$ is a fibration and $W \to B$ is also a fibration so $q_c$ is a weak equivalence by right properness,
and one sees directly that $g_c$ is a fibration. Pulling back $g_cq_c$ along the base change along $c$ of $Y \to Z$
determines a fibration $(Q \times_Z Y) \times_B A \to Y \times_B A$, and it suffices to show that the induced map $X
\times_B A \to (Q \times_Z Y) \times_B A$ is a weak equivalence. This map is the base change along $c$ of the
induced map $X \to Q \times_Z Y$ which is a weak equivalence since the original square was homotopy cartesian. The
composite $Q \times_Z Y \to Y \to B$ is a fibration since $Y \to B$ is a fibration, and $X \to B$ is also a
fibration so the result follows by right properness.
\end{proof}

\begin{prop} \label{ptdhc}
For any constant sheaf of groups $H \deq \Gamma^\ast H$ on a pointed small Grothendieck site $(\mc{C}, x)$ with a
subcanonical topology and finite products and any square $S$ of pointed maps as above such that either one of the sheaf
maps $U \to \ast$ or $V \to \ast$ is monic or $S$ itself is homotopy cocartesian for the injective closed model
structure on $\Shv{\mc{C}}$, there is a homotopy cartesian diagram
\begin{displaymath}
\xymatrix{ \Htor_x(\mc{C}) \ar[r] \ar[d]  &  \Htor_u(U) \ar[d] \\
           \Htor_v(V) \ar[r]              &  \Htor_{u \times v}(U \times V) }
\end{displaymath}
of groupoids of pointed torsors natural in $H$.
\end{prop}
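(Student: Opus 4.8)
The plan is to realize the pointed square as the diagram of vertical homotopy fibres of the unpointed homotopy cartesian square $H_S$, taken over a single common base, and then to invoke the stability of homotopy cartesian squares under passage to homotopy fibres. First I would note that because $H \deq \Gamma^\ast H$ is \emph{constant}, each of the inverse image functors $x^\ast$, $u^\ast$, $v^\ast$ and $(u \times v)^\ast$ sends an $H$-torsor on its site to its underlying torsor in $\Set$ with the \emph{same} structure group, so all four corners of $H_S$ map to one and the same groupoid $B \deq \Htor(\Set)$. Since $u$ lies over $x$ (i.e.\ $x^\ast \cong u^\ast \circ (?_{|U})$), and similarly $v$ lies over $x$ while $u \times v$ lies over both $u$ and $v$, these four functors cohere into a map of commutative squares from $H_S$ to the constant square on $B$, carrying the trivial $H$-torsor in each corner to a common basepoint $b_0 \in B$.

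By Lemma $1$ of \cite{Jardine14} the homotopy fibre over $b_0$ of each corner-to-$B$ functor is precisely the corresponding groupoid of pointed torsors, namely $\Htor_x(\mc{C})$, $\Htor_u(U)$, $\Htor_v(V)$ and $\Htor_{u \times v}(U \times V)$. Thus the square to be proved homotopy cartesian is exactly the cornerwise homotopy fibre of $H_S$ over the constant base $B$ at $b_0$, and the proposition reduces to the purely homotopy-theoretic assertion that forming homotopy fibres over a fixed base carries a homotopy cartesian square to a homotopy cartesian square.

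To establish this I would work in the (right proper) folk model structure on groupoids and replace each structure map $P \to B$ by a fibration functorially, e.g.\ by the comma/path construction $P \times_B B^I \to B$ already used in the proof of Lemma \ref{gpdlem}; this leaves a square weakly equivalent to $H_S$, hence still homotopy cartesian, whose four maps to $B$ are now fibrations, so that the homotopy fibres are computed as strict fibres over $b_0$. Form the cube whose bottom face is $H_S$ and whose top face is the square of these fibres. Each vertical face is homotopy cartesian by the pasting law for homotopy pullbacks — a fibre over $b_0$ is a homotopy pullback along $b_0 \colon \ast \to B$ and such pullbacks compose — with right properness and Lemma \ref{hcart} supplying the model-categorical base-change bookkeeping. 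A cube whose bottom and all four vertical faces are homotopy cartesian has homotopy cartesian top face; equivalently, by Fubini for homotopy limits $\ast \times^h_B (-)$ commutes with the homotopy pullback exhibiting $\Htor(\mc{C})$ as $\Htor(U) \times^h_{\Htor(U \times V)} \Htor(V)$. As every step is functorial in $H$, the resulting pointed square is natural in $H$.

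The step I expect to be the main obstacle is exactly this interchange of homotopy limits: one must guarantee that all four homotopy fibres are formed over the genuinely common base $B$ at the genuinely common basepoint $b_0$ — which rests on the topos-theoretic compatibility $x^\ast \cong u^\ast \circ (?_{|U})$ of the point with its restrictions — and that the fibrant replacements can be chosen coherently across the square. It is tempting to shortcut the argument by observing that pointed torsors have no nontrivial automorphisms, so that each pointed-torsor groupoid collapses to the discrete groupoid on its set $H^1$ of components and one could verify a mere pullback of sets via Proposition \ref{pbprop} and the $\pi_1$-surjectivity of Lemma \ref{gpdlem}. However this collapse requires the relevant site to be connected, and the proposition deliberately allows $U$ and $V$ to be disconnected (the covering need not consist of monomorphisms); a pointed torsor on a disconnected $\mc{C}/(U,u)$ may carry automorphisms supported away from $u$, so the groupoids need not be discrete and the genuinely homotopy-theoretic fibre argument above is essential.
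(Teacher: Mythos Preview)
Your proposal is correct and follows essentially the same strategy as the paper's own proof: identify a single common base $B = \Htor(\Set)$ using constancy of $H$ and the compatibility of the points, recognize the pointed square as the cornerwise homotopy fibre of $H_S$ over $b_0$ via Lemma~$1$ of \cite{Jardine14}, and then argue that passage to homotopy fibres over a fixed base preserves homotopy cartesian squares. The only difference is in the execution of this last step: the paper factors $\ast \to B$ as a trivial cofibration followed by a fibration and treats the two pullbacks separately (the fibration pullback by direct calculation, the trivial cofibration pullback via Lemma~\ref{hcart} after functorial fibrant replacement of the structure maps), whereas you replace the four structure maps by fibrations first and invoke a Fubini/cube argument for iterated homotopy pullbacks. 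These are equivalent bookkeeping choices in a right proper model category, and your observation about why the discrete shortcut fails on disconnected $U$ or $V$ is a useful gloss the paper does not make explicit.
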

\begin{proof}
There is a diagram
\begin{displaymath}
\xymatrix{ x^\ast(H)\textrm{-}\Tors(\Set) \ar[r]^{?_{|U}} \ar[d]^{?_{|V}}  &
           u^\ast(H)\textrm{-}\Tors(\Set) \ar[d]^{?_{|U \times V}} \\
           v^\ast(H)\textrm{-}\Tors(\Set) \ar[r]^(0.45){?_{|U \times V}}  &  (u \times v)^\ast(H)\textrm{-}\Tors(\Set) }
\end{displaymath}
under $H_S$ induced by $S$ which commutes since the maps of $S$ are \emph{pointed}. In fact, $H = x^\ast(H) =
u^\ast(H_{|U})$ and similarly for the other points, so that all of the groupoids in this diagram are identical to the
groupoid $\Htor(\Set)$, and all of the restriction maps are equalities. One may therefore unambiguously take the
homotopy fibre of $H_S$ over the trivial torsor $H$ of the groupoid $\Htor(\Set)$, and this of course results in the
desired commutative square $H_\ast$ of groupoids of \emph{pointed} $H$-torsors over $x$ by Lemma $1$ of
\cite{Jardine14}.

It remains to be shown that $H_\ast$ is also homotopy cartesian. Let
\begin{displaymath}
h: \ast \to \Htor(\Set)
\end{displaymath}
denote the unique map picking out the trivial $H$-torsor in $\Set$ and factor it as a trivial cofibration $c: \ast \to
C$ followed by a fibration $f: C \to \Htor(\Set)$ (this may be understood in terms of the nerves of the groupoids and
the closed model structure on simplicial sets). A direct calculation shows that pulling back along fibrations sends
homotopy cartesian squares to homotopy cartesian squares, so it suffices to show that pulling back along $c$ preserves
homotopy cartesian squares. Suppose one has a homotopy cartesian square $H_C$ over $C$, and functorially replace all the
maps to $C$ by fibrations; the resulting objects of the new commutative square $H_C^\prime$ are weakly (even homotopy)
equivalent to the old ones and nerves of groupoids are Kan complexes so that $H_C^\prime$ is also homotopy cartesian.
Now apply Lemma \ref{hcart}.
\end{proof}

\begin{lem} \label{proreplem}
Suppose that $D: J \to \Set^{\mc{C}}$ is a finite loop-free diagram of pro-representable functors on a category $\mc{C}$
with all finite limits and colimits. Then
\begin{displaymath}
\lim_{J}{D}
\end{displaymath}
is a functor that is pro-representable by the colimit on $J^{op}$ of the representing pro-objects.
\end{lem}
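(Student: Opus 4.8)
The plan is to reduce the assertion to the universal property of a colimit of pro-objects, by way of the standard contravariant identification of pro-representable functors with pro-objects. Write $\pro{\mc{C}}$ for the category of pro-objects of $\mc{C}$, and for a pro-object $P = (X_i)$ let $F_P \deq \colim_i \Hom(X_i, -): \mc{C} \to \Set$ be the associated pro-representable functor. The two facts I would take as the backbone are that $F_P(Y) \cong \Hom_{\pro{\mc{C}}}(P, Y)$ for every object $Y$ of $\mc{C}$ regarded as a constant pro-object, and that $P \mapsto F_P$ is a fully faithful \emph{contravariant} functor $\pro{\mc{C}} \to \Set^{\mc{C}}$ whose essential image is precisely the pro-representable functors. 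Both are classical, and together they convert questions about limits of pro-representable functors into questions about colimits of pro-objects.

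First I would lift the diagram. Since each $D(j)$ is pro-representable, choose a representing pro-object $P_j$ with $D(j) \cong F_{P_j}$; by the full faithfulness and contravariance just recalled, $D$ lifts uniquely (up to canonical isomorphism) to a functor $J^{op} \to \pro{\mc{C}}$, $j \mapsto P_j$, the structure maps $P_{j'} \to P_j$ being forced by those of $D$. The substantive step is then to realize $P \deq \colim_{J^{op}} P_\bullet$ \emph{explicitly} as a genuine pro-object. This is where all the hypotheses are consumed and is the \textbf{main obstacle}: a colimit of cofiltered diagrams is not computed levelwise for an arbitrary shape, but for a finite \emph{loop-free} diagram one may invoke the classical level-representation (reindexing) argument to realize the finite family $(P_j)$ on a single common cofiltered index category $I$, with the maps of the diagram given levelwise. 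Forming the levelwise colimit in $\mc{C}$ — which exists because $\mc{C}$ has finite colimits — yields a cofiltered diagram indexed by $I$, hence an honest pro-object, and one checks it carries the universal property of $\colim_{J^{op}} P_\bullet$ in $\pro{\mc{C}}$. The finiteness and loop-freeness of $J$ are exactly what permit the common reindexing (no nontrivial loops to obstruct gluing the index systems), and the finite limits of $\mc{C}$ guarantee the glued index category remains cofiltered.

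With $P$ in hand the conclusion is a formal Yoneda-style computation, which I would carry out by evaluating at an arbitrary object $Y$ of $\mc{C}$ and using that limits in the functor category $\Set^{\mc{C}}$ are computed pointwise:
\begin{displaymath}
(\lim_J D)(Y) \cong \lim_{j \in J} \Hom_{\pro{\mc{C}}}(P_j, Y) \cong \Hom_{\pro{\mc{C}}}(\colim_{J^{op}} P_\bullet, Y) \cong F_P(Y),
\end{displaymath}
where the middle isomorphism is nothing but the universal property of $P = \colim_{J^{op}} P_\bullet$ established in the preceding step. These isomorphisms are natural in $Y$, so $\lim_J D \cong F_P$ and $\lim_J D$ is pro-represented by the colimit on $J^{op}$ of the representing pro-objects, as claimed. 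I expect no difficulty beyond the second paragraph: once one knows that the colimit exists in $\pro{\mc{C}}$ and is manifestly a pro-object, the identification of the representing object is automatic, and the entire weight of the finite loop-free hypothesis is borne by the levelwise construction.
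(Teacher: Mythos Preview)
Your proposal is correct and follows essentially the same route as the paper: both invoke the Artin--Mazur reindexing (what the paper calls the uniform approximation theorem, appendix 3.3 of \cite{AM}) to realize the $J^{op}$-diagram of representing pro-objects levelwise over a common cofiltered index, then form the levelwise finite colimit and identify it as the representing pro-object. The only cosmetic difference is that you phrase the final identification via the universal property of the colimit in $\pro{\mc{C}}$ evaluated against constant pro-objects, whereas the paper observes directly that at each level $i$ the ordinary colimit represents the limit of the level-$i$ representable functors and then passes to the filtered colimit over $I$; these are the same computation read in two orders.
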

\begin{proof}
The $J^{op}$-diagram of representing pro-objects is also loop-free, so by the uniform approximation theorem of ($3.3$,
appendix, \cite{AM}) it may be replaced by a cofiltered levelwise diagram of the same shape that is pro-isomorphic to
the original. The finite colimits computed levelwise represent the colimit of the representing pro-objects by ($4.1$,
appendix \cite{AM}). Moreover, at each object $i$ of the new index category $I$ the functor represented by corresponding
levelwise colimit represents the limit over $J$ of the corresponding diagram of representable functors at $i$.
Therefore, the colimits determine a levelwise pro-representation of the limit of the $J$-diagram $D$.
\end{proof}

Finally, the main theorem may be proven:
\begin{thm} \label{mainthm}
Suppose that $(\mc{C}, x)$ is a pointed connected small Grothendieck site with a subcanonical topology, finite limits,
and arbitrary small coproducts, and that $(U, u)$ and $(V, v)$ are pointed connected objects of $\mc{C}$ such that $U
\times V$ is also connected and the pullback square $S$
\begin{displaymath}
\xymatrix{ U \times V \ar[r] \ar[d]  &  U \ar[d] \\
           V \ar[r]  &  \ast }
\end{displaymath}
is a pushout of pointed sheaves on $\mc{C}$, where $U \times V$ is pointed by $u \times v$. Furthermore, suppose that
either one of the sheaf maps $U \to \ast$ or $V \to \ast$ is monic or that $S$ itself is homotopy cocartesian for the
injective closed model structure on $\Shv{\mc{C}}$. Then there is a pushout diagram
\begin{displaymath}
\xymatrix{ \pi_1^{\ets}(\mc{C}/U \times V, u \times v) \ar[r] \ar[d]  &  \pi_1^{\ets}(\mc{C}/U, u) \ar[d] \\
           \pi_1^{\ets}(\mc{C}/V, v) \ar[r]                           &  \pi_1^{\ets}(\mc{C}) }
\end{displaymath}
of ``\'etale'' homotopy pro-groups, each defined in the usual way by means of pointed representable \v{C}ech hypercovers
on its respective site.
\end{thm}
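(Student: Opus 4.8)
The plan is to assemble the machinery built up in the preceding sections into a single chain: convert the geometric pushout hypothesis into a homotopy cartesian square of pointed torsor groupoids, pass to path components to obtain an honest pullback of the functors $H^1_?(\mc{C}/?,-)$, use pro-representability to replace each such functor by the corresponding $\pi_1^{\ets}$, and finally translate the resulting pullback of pro-representable functors into a pushout of representing pro-groups. First I would apply Proposition \ref{ptdhc} verbatim to the square $S$; its hypotheses are exactly those assumed here (subcanonical topology, finite products, and either one leg monic or $S$ homotopy cocartesian), so for every constant sheaf of discrete groups $H \deq \Gamma^\ast H$ one obtains a homotopy cartesian square
\begin{displaymath}
\xymatrix{ \Htor_x(\mc{C}) \ar[r] \ar[d]  &  \Htor_u(U) \ar[d] \\
           \Htor_v(V) \ar[r]              &  \Htor_{u \times v}(U \times V) }
\end{displaymath}
of groupoids of pointed $H$-torsors, natural in $H$.

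Next I would apply $\pi_0$ and invoke Proposition \ref{pbprop} to promote the homotopy pullback to a strict pullback of pointed sets. The observation that makes this work is connectivity: since $U$, $V$, and $U \times V$ are connected, the slice sites $\mc{C}/U$, $\mc{C}/V$, $\mc{C}/(U \times V)$ are connected, and so by the Galois-theoretic faithfulness argument recorded after Lemma \ref{torstriv} their groupoids of pointed torsors have no nontrivial automorphisms. Hence every fundamental group in sight is trivial, and the surjectivity-on-$\pi_1$ hypothesis of Proposition \ref{pbprop} holds vacuously. This yields a genuine pullback square
\begin{displaymath}
\xymatrix{ H^1_x(\mc{C}, H) \ar[r] \ar[d]  &  H^1_u(\mc{C}/U, H) \ar[d] \\
           H^1_v(\mc{C}/V, H) \ar[r]        &  H^1_{u \times v}(\mc{C}/(U \times V), H) }
\end{displaymath}
of pointed sets, natural in discrete groups $H$, hence a pullback square in the functor category $\Set^{\Grp}$.

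The remaining work is bookkeeping with pro-objects. Each slice site inherits connectivity, finite limits, arbitrary small coproducts, and a subcanonical topology from $\mc{C}$, and admits pointed trivializations by Lemma \ref{torstriv}; thus Theorem \ref{charthm} identifies each of the four functors above as pro-representable by the corresponding $\pi_1^{\ets}$. Reading the pullback square as the limit over the cospan diagram $D$ formed by its lower-left, lower-right, and upper-right corners --- a finite, loop-free diagram of pro-representable functors on $\Grp$ --- Lemma \ref{proreplem} identifies the representing pro-object of $\lim D = H^1_x(\mc{C},-)$, namely $\pi_1^{\ets}(\mc{C})$, with the colimit over the opposite (span) shape of the representing pro-objects, i.e. the pushout of $\pi_1^{\ets}(\mc{C}/U, u) \leftarrow \pi_1^{\ets}(\mc{C}/(U \times V), u \times v) \to \pi_1^{\ets}(\mc{C}/V, v)$. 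Uniqueness of pro-representing objects up to pro-isomorphism then delivers precisely the asserted pushout square, the contravariance of pro-representation sending the $H^1$-restriction maps to the displayed maps of pro-groups.

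The hard part will be the final passage through Lemma \ref{proreplem}: keeping the variance straight, since pro-representation is contravariant, so that the pullback of functors becomes a pushout of pro-groups and the restriction maps reverse direction, and confirming that the cospan diagram is genuinely finite and loop-free so that the uniform approximation theorem of \cite{AM} may be applied levelwise. One must also verify that the naturality in $H$ established above is honest naturality of functors on $\Grp$ rather than merely on $\pi\Grp$ --- this is exactly the obstruction that pointed torsors were introduced to remove and that Theorem \ref{charthm} secures, so it is essential that all four corners be computed via \emph{pointed} $H^1_x$ on \emph{connected} slice sites.
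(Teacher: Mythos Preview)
Your proposal is correct and follows essentially the same route as the paper's proof: invoke Proposition \ref{ptdhc} for the homotopy cartesian square of pointed torsor groupoids, use connectivity and Proposition \ref{pbprop} to pass to a strict pullback of $H^1_?$ functors in $\Set^{\Grp}$, apply Theorem \ref{charthm} (via Lemma \ref{torstriv}) for pro-representability, and finish with Lemma \ref{proreplem}. Your write-up is more explicit about the variance bookkeeping and the verification that the slice sites inherit the needed hypotheses, but the argument is the same.
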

\begin{proof}
The homotopy cartesian squares of Proposition \ref{ptdhc} are clearly natural in $H$ by construction. As $U$, $V$, and
$U \times V$ are connected, the sites $\mc{C}/U$, $\mc{C}/V$, and $\mc{C}/U \times V$ are also connected so that the
respective groupoids of pointed torsors for any constant sheaf of groups $H$ have trivial fundamental groups. Then
Proposition \ref{pbprop} implies that these homotopy cartesian squares together determine a pullback diagram
\begin{displaymath}
\xymatrix{ H^1_x(\mc{C}, -) \ar[r] \ar[d]  &  H^1_u(\mc{C}/U, -) \ar[d] \\
           H^1_v(\mc{C}/V, -) \ar[r]       &  H^1_{u \times v}(\mc{C}/U \times V, -) }
\end{displaymath}
in the functor category $\Set^{\Grp}$. Each of these functors is pro-representable in $\Grp$ by Theorem \ref{charthm}
since all pointed $H$-torsors admit pointed trivializations, so Lemma \ref{proreplem} applies to give the desired
pushout square.
\end{proof}

The word ``\'etale'' is in quotes in the statement of the Theorem because one need not assume that the topology on
$\mc{C}$ is the \'etale topology: the statement still makes sense in any pointed connected site $(\mc{C}, x)$ since the
pro-groups $\pi_1^{\ets}(-)$ are defined by means of pointed connected hypercovers, and the definition of hypercovers is
independent of any particular Grothendieck topology. This Theorem requires no assumptions regarding effective descent
associated to the cover by $U$ and $V$ (cf. \cite{Stix1}) nor is it restricted exclusively to covers by monomorphisms
(as in the case of covers by open substacks; cf. \cite{Z1}).

Of course, in the case where one is actually working in an \'etale site where the ``points'' are defined by geometric
points, one may conclude with a more geometric statement. Here is an example:

\begin{cor} \label{corevc}
Suppose that $X$ is a connected locally noetherian scheme or DM stack, $\et{X}$ the \'etale site of $X$ as defined in
\cite{Z1}, $x: \Omega \to X$ a geometric point of $X$, $(U, u)$ and $(V, v)$ pointed (over $x$) and connected objects of
$\et{X}$ such that $U \times_X V$ is also pointed (by $u \times v$) and connected, and suppose that the pullback square
$S$
\begin{displaymath}
\xymatrix{ U \times V \ar[r] \ar[d]  &  U \ar[d] \\
           V \ar[r]  &  \ast }
\end{displaymath}
is a pushout of sheaves on $\et{X}$. Then, if either one of the sheaf maps $U \to \ast$ or $V \to \ast$ is monic or $S$
itself is homotopy cocartesian for the injective closed model structure on $\Shv{\et{X}}$ then there is a pushout
diagram
\begin{displaymath}
\xymatrix{ \pi_1^{\ets}(U \times V, u \times v) \ar[r] \ar[d]  &  \pi_1^{\ets}(U, u) \ar[d] \\
           \pi_1^{\ets}(V, v) \ar[r]                           &  \pi_1^{\ets}(X, x) }
\end{displaymath}
of \'etale homotopy pro-groups, each defined in the usual way by means of pointed representable \v{C}ech hypercovers
on its respective subsite.
\end{cor}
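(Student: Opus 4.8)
The plan is to obtain Corollary \ref{corevc} as a direct specialization of Theorem \ref{mainthm}: once the \'etale site $\et{X}$ is seen to satisfy the standing hypotheses of that theorem, and the geometric data $(x, U, u, V, v)$ is translated into the topos-theoretic language used there, the conclusion is literally the conclusion of the theorem read back through the dictionary $\mc{C} = \et{X}$. First I would verify the structural hypotheses on $\et{X}$. Since $X$ is a connected locally noetherian scheme or DM stack, $\et{X}$ is a connected, locally connected small Grothendieck site (by $3.1$ of \cite{Z2}) whose topology is subcanonical; it has finite limits because \'etale morphisms are stable under fibre product and $X$ (represented by $\ast$) is terminal, and it has arbitrary small coproducts given by disjoint unions of \'etale $X$-schemes. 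Thus all of the running assumptions of Theorem \ref{mainthm} are in force.

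Next I would install the dictionary between geometric and topos-theoretic points. As explained in Section 3, the geometric point $x: \Omega \to X$ determines the stalk geometric morphism $x^\ast: \Shv{\et{X}} \to \Set$, and a pointing $u: \Omega \to U$ over $x$ is the same datum as a global section $u: \ast \to x^\ast(U)$; hence the pointed connected objects $(U,u)$ and $(V,v)$ of the corollary are exactly pointed connected sheaves in the sense of the theorem. The two remaining identifications are purely categorical. The slice site $\et{X}/U$ is canonically equivalent, as a pointed site, to $\et{U}$, since an \'etale $X$-morphism over $U$ is the same as an \'etale morphism to $U$ and the induced point on $\et{U}$ is again $u$; consequently $\pi_1^{\ets}(\mc{C}/U, u) \cong \pi_1^{\ets}(U, u)$, and likewise for $V$ and for $U \times_X V$. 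Finally, the categorical product of the representable sheaves $U$ and $V$ in $\Shv{\et{X}}$ is represented by the fibre product $U \times_X V$ precisely because $X$ is the terminal object, so the product $U \times V$ appearing in $S$ is $U \times_X V$ and is connected by hypothesis.

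With these identifications the square $S$, its pushout property, and the ``monic-or-homotopy-cocartesian'' alternative are exactly the hypotheses of Theorem \ref{mainthm}, so that theorem produces the pushout square of pro-groups displayed in the corollary. The work here is essentially bookkeeping, and the only point demanding genuine care is the equivalence of pointed sites $\et{X}/(U,u) \simeq \et{U}$ together with the compatibility of the restricted stalk functor with the geometric point $u$: one must check that the pointed representable hypercovers and the torsor groupoids computed on the slice agree with those computed on $\et{U}$, which is immediate from the equivalence but should be recorded explicitly since the pro-groups $\pi_1^{\ets}(-)$ are defined through precisely these data. No new homotopy-theoretic input beyond Theorem \ref{mainthm} is required.
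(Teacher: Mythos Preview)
Your proposal is correct and follows the same route as the paper: verify that $\et{X}$ meets the standing hypotheses of Theorem \ref{mainthm} and then read off the conclusion through the identifications $\mc{C}/U \simeq \et{U}$, etc. The paper's own proof is terser and singles out one verification you subsume implicitly---that pointed $H$-torsors admit pointed representable trivializations, which it dispatches via Lemma \ref{torstriv}; since you check the subcanonical-topology and small-coproducts hypotheses that feed that lemma (and that already appear among the hypotheses of Theorem \ref{mainthm}), your argument covers this as well.
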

\begin{proof}
The only condition to check is that pointed $H$-torsors admit representable pointed trivializations for all discrete
groups $H$, but this follows from Lemma \ref{torstriv}.
\end{proof}

The corresponding statement about the usual profinite Grothendieck fundamental groups immediately follows:

\begin{cor} \label{corfin}
With the hypotheses and notation of Corollary \ref{corevc}, there is a pushout square
\begin{displaymath}
\xymatrix{ \pi_1^{\Gals}(U \times V, u \times v) \ar[r] \ar[d]  &  \pi_1^{\Gals}(U, u) \ar[d] \\
           \pi_1^{\Gals}(V, v) \ar[r]  &  \pi_1^{\Gals}(X, x) }
\end{displaymath}
of profinite Grothendieck fundamental groups.
\end{cor}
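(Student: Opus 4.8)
The plan is to deduce the profinite square directly from the pushout of ``\'etale'' homotopy pro-groups supplied by Corollary \ref{corevc}, by applying the profinite completion functor and then translating back via Proposition \ref{grothcmp}. Writing $(-)\,\widehat{\,}\,$ for profinite completion, I would first recall that this functor is left adjoint to the inclusion $\pro{\FinGrp} \hra \pro{\Grp}$ of pro-finite groups into pro-groups, so that it preserves all small colimits, and pushouts in particular.

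Applying $(-)\,\widehat{\,}\,$ to the pushout diagram of Corollary \ref{corevc} then yields a commutative square whose four vertices are the completions $\pi_1^{\ets}(U \times V, u \times v)\,\widehat{\,}\,$, $\pi_1^{\ets}(U, u)\,\widehat{\,}\,$, $\pi_1^{\ets}(V, v)\,\widehat{\,}\,$, and $\pi_1^{\ets}(X, x)\,\widehat{\,}\,$. Since a pushout is a colimit and profinite completion preserves colimits, this completed square is again a pushout, now computed in $\pro{\FinGrp}$; that is, a pushout of profinite groups.

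It then remains to identify each completed vertex with the corresponding Grothendieck fundamental group. Each of $U$, $V$, and $U \times_X V$ is a connected object of $\et{X}$, hence an \'etale (thus locally noetherian) and connected scheme or DM stack over $X$, whose slice site $\et{X}/U$ may be identified with its own \'etale site $\et{U}$, and similarly for the remaining objects. Proposition \ref{grothcmp} therefore applies to each of the four objects $X$, $U$, $V$, $U \times_X V$ and furnishes pro-isomorphisms $\pi_1^{\ets}(W, w)\,\widehat{\,}\, \cong \pi_1^{\Gals}(W, w)$. These arise from the fact that both sides pro-represent the restriction of $H^1_w(-, -)$ to $\FinGrp$, so they are compatible with the restriction and pullback maps that constitute the square; substituting them into the completed square produces exactly the asserted pushout of profinite groups.

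The point requiring the most care, and the one I would regard as the main obstacle, is the claim that profinite completion genuinely preserves \emph{this} pushout. This rests on the adjunction between $\pro{\Grp}$ and $\pro{\FinGrp}$ together with the observation that the pushout of profinite groups appearing in the statement is the colimit taken in $\pro{\FinGrp}$ --- equivalently, the free profinite product of $\pi_1^{\Gals}(U, u)$ and $\pi_1^{\Gals}(V, v)$ amalgamated over $\pi_1^{\Gals}(U \times V, u \times v)$ --- and not some naive levelwise or underlying-discrete amalgam. Once the target pushout is so identified, the compatibility of the pro-isomorphisms of Proposition \ref{grothcmp} with the three structure maps of the square is routine, and the corollary follows.
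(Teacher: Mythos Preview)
Your proposal is correct and follows essentially the same approach as the paper: apply profinite completion, which as a left adjoint to the inclusion $\pro{\FinGrp} \hra \pro{\Grp}$ preserves pushouts, to the square of Corollary \ref{corevc}, and then invoke Proposition \ref{grothcmp} to identify each completed vertex with the corresponding Grothendieck fundamental group. Your additional remarks on identifying the slice sites with the \'etale sites of $U$, $V$, $U \times_X V$ and on the compatibility of the pro-isomorphisms with the structure maps are careful elaborations of points the paper leaves implicit, but the underlying argument is the same.
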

\begin{proof}
The profinite completion functor $\widehat{\,}\,$ is a left adjoint to the inclusion functor
\begin{displaymath}
i: \textrm{pro-}\FinGrp \hra \textrm{pro-}\Grp
\end{displaymath}
so in particular it preserves pushouts. By Corollary \ref{corevc} one therefore gets a pushout square of the profinite
completions of \'etale fundamental groups, but these are identified with the respective Grothendieck fundamental
groups by Proposition \ref{grothcmp}.
\end{proof}
Importantly, this result is valid even when the pointed covering square $S$ is constructed in $\et{X}$ rather than
$\finet{X}$.

\bibliographystyle{amsalpha}
\bibliography{evc}

\end{document}